\documentclass[11pt,letterpaper]{amsart}

\usepackage{amsmath}
\usepackage{amssymb}
\usepackage{amsthm}
\usepackage{geometry}
\usepackage{xcolor}
\definecolor{babyblue}{RGB}{35,135,220}
\usepackage{tikz}
\usepackage{comment}
\usepackage{float}

\usepackage[
    backend=biber,
    style=numeric,
    sorting=none,
    maxbibnames=99,
    doi=true,
    eprint=false,
    isbn=true,
    url=true
]{biblatex}
\usepackage[
    colorlinks=true,
    citecolor=babyblue,
    linkcolor=babyblue,
    urlcolor=babyblue,
]{hyperref}

\usepackage{pgfplots}
\pgfplotsset{compat=1.18}

\usetikzlibrary{arrows.meta}
\theoremstyle{plain}
\newtheorem{theorem}{Theorem}[section]
\newtheorem{lemma}[theorem]{Lemma}

\newtheorem{corollary}[theorem]{Corollary}
\newtheorem{conjecture}[theorem]{Conjecture}

\newtheorem{definition}[theorem]{Definition}

\newtheorem{remark}[theorem]{Remark}
\newtheorem{observation}[theorem]{Observation}

\newcommand{\Z}{\mathbb Z}

\newcommand{\defect}{\operatorname{def}}

\title[A Hall Condition for Hypergraphs]
{A Necessary and Sufficient Hall Condition for
Hypergraphs}

\author{Xiaoyao Huang}
\address{Department of Mathematics, University of Michigan, Ann Arbor, MI 48109}
\email{xyrushac@umich.edu}

\date{\today}
\subjclass[2020]{Primary 05C70; Secondary 05C65, 05C45}
\keywords{Hall's theorem, hypergraph matching, matroid, toughness, chordal graphs, Hamiltonicity}

\begin{document}

\begin{abstract}
We prove a necessary and sufficient Hall condition for a family \(\mathcal A=(A_e)_{e\in E(G)}\) of hypergraphs indexed by the edges of a forest \(G\). 
This restriction on the index graph is sharp.

The loop-only case recovers the classical Hall's Theorem with multiplicities for arbitrary finite set systems, while the loopless case shows that full rainbow matching is polynomial time solvable under this forest structure, although the problem is NP-complete in general.

As another application, we prove every \(5\)-tough chordal graph is Hamilton-connected, improving toughness bounds of \(18\) for Hamiltonicity (1998) and \(10\) for Hamilton-connectedness (2017).
\end{abstract}

\maketitle


\section{Introduction}
Hall's Marriage Theorem characterizes when a family of sets has a \emph{system of distinct representatives} (SDR).
An SDR for a family of hypergraphs analogously chooses one hyperedge from each member so that the chosen hyperedges are pairwise vertex disjoint.
For general families of hypergraphs, the analogous Hall condition remains necessary but is not sufficient.
Aharoni and Haxell~\cite{AharoniHaxell} proved a sufficient Hall condition based on matchings in unions of subfamilies.
We identify a natural class in which the hypergraphs share a common ground set and are indexed by the edges of a tree, while each ground set element is associated with a subtree of that tree.
In this class, our Hall condition is both necessary and sufficient.

More precisely, let \(T\) be a finite tree and let \(U\) be a finite set of \emph{resources}.
Associate with every \(x\in U\) a subtree \(Q_x\) of \(T\), called the \emph{support} of \(x\).
For \(t\in V(T)\), define the \emph{bag} at \(t\) by
\[
  P_t:=\{x\in U:t\in V(Q_x)\}.
\]
A family \(\mathcal A=(A_e)_{e\in E(T)}\) is a \emph{tree-structured hypergraph system} if, for every \(e=tu\), \(A_e\) is a hypergraph on \(U\) such that its loops are the resources in \(P_t\cap P_u\), and every nonloop hyperedge is of the form \(\{x,y\}\), where \(x\in P_t\) and \(y\in P_u\).
Given \(q:E(T)\to\mathbb Z_{>0}\), a \emph{\(q\)-SDR} chooses \(q_e:=q(e)\) representatives from every \(A_e\), with all chosen representatives pairwise vertex disjoint.
The precise definitions are given in Definitions~\ref{def:tree-system} and~\ref{def:qSDR}.

For every \(e\in E(T)\), introduce \(q_e\) vertices \(z_{e,1},\ldots,z_{e,q_e}\).
Let \(H(\mathcal A,q)\) be the hypergraph on
\[
  U\cup\{z_{e,i}:e\in E(T),\ 1\le i\le q_e\}.
\]
For every \(x\in U\), include the singleton hyperedge \(\{x\}\).
For every \(e\in E(T)\), \(1\le i\le q_e\), and \(a\in E(A_e)\), also include the hyperedge \(\{z_{e,i}\}\cup a\), and include no others.

Our main theorem is the following Hall-type characterization.
\begin{theorem}
\label{thm:qSDR}
Let \(\mathcal A=(A_e)_{e\in E(T)}\) be a tree-structured hypergraph system, and let \(q:E(T)\to\mathbb Z_{>0}\) be a multiplicity function.
The following statements are equivalent:
\begin{itemize}
  \item \(\mathcal A\) admits a \(q\)-SDR.
    \hfill\textup{[SDR]}
  \item The hypergraph \(H(\mathcal A,q)\) has a perfect matching.
    \hfill\textup{[Matching]}
  \item \(\displaystyle \nu\left(\bigcup_{e\in F}E(A_e)\right) \ge \sum_{e\in F}q_e \) for every \(F\subseteq E(T)\).
    \hfill\textup{[Hall]}
\end{itemize}
\end{theorem}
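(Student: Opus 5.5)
The equivalence [SDR]\(\Leftrightarrow\)[Matching] is bookkeeping, and I would do it first. Given a \(q\)-SDR, match each \(z_{e,i}\) with \(\{z_{e,i}\}\cup a_{e,i}\), where \(a_{e,1},\dots,a_{e,q_e}\) are the representatives chosen from \(A_e\). Then cover every unused resource by its singleton. Conversely, in a perfect matching of \(H(\mathcal A,q)\) every \(z_{e,i}\) lies in exactly one hyperedge \(\{z_{e,i}\}\cup a\) with \(a\in E(A_e)\). Disjointness of the matching gives \(q_e\) pairwise disjoint representatives of each \(A_e\), disjoint across different \(e\) as well. Necessity of [Hall] is also immediate: for \(F\subseteq E(T)\), the representatives chosen for \(e\in F\) form a matching in \(\bigcup_{e\in F}E(A_e)\) of size \(\sum_{e\in F}q_e\). (If a representative could appear in two \(A_e\)'s, it is still counted once per index, since the chosen edges are vertex disjoint and hence distinct.)

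The real content is [Hall]\(\Rightarrow\)[SDR]. First observe that every hyperedge has size at most \(2\): it is a loop or a pair \(\{x,y\}\). So \(\bigcup_{e\in F}E(A_e)\) is a graph with loops, and \(\nu\) is an ordinary matching number. I would argue by induction on \(|E(T)|\), peeling off a leaf \(t\) with pendant edge \(e=tu\).

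The support condition is what makes a leaf useful. A resource \(x\in P_t\setminus P_u\) has \(Q_x\) a subtree containing \(t\) but not \(u\), so \(Q_x=\{t\}\); such resources are private to \(A_e\). Hence the only interaction between \(A_e\) and the rest of the system is the set \(S\subseteq P_u\) of shared resources consumed by the \(q_e\) representatives of \(A_e\). For any choice of representatives, deleting the consumed resources from \(U\) leaves a tree-structured system on \(T-t\), since supports restricted to the surviving resources are still subtrees. So the induction closes once we find representatives for \(e\) after which [Hall] still holds on \(T-t\) with the reduced resource set.

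The key step, and the one I expect to be the main obstacle, is choosing these \(q_e\) representatives. I would set it up as an exchange argument.

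Among all choices of \(q_e\) disjoint edges of \(A_e\) (these exist by [Hall] with \(F=\{e\}\)), pick one minimizing the total Hall deficiency of the reduced system. Suppose some \(F'\subseteq E(T)\setminus\{e\}\) is still violated. Apply Hall to \(F'\cup\{e\}\): it yields a matching \(M\) of size \(\sum_{F'}q+q_e\) in the union. Compare \(M\) with (our chosen representatives) \(\cup\) (a maximum matching of \(F'\) in the reduced system). The symmetric difference decomposes into alternating paths and cycles. Because the edges touching private resources belong only to \(A_e\) and shared resources all lie in the bag \(P_u\), these paths can be rerouted. A path that swaps a representative of \(e\) using a shared vertex \(y\in P_u\) for one using a private vertex, or a different shared vertex, should strictly reduce the deficiency, contradicting minimality.

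To make this rigorous I would use the fact that the feasible sets \(S\) behave like the independent sets of a matroid: they are the \(P_u\)-traces of matchings of \(A_e\), which form a matching matroid restricted to \(P_u\). I would also need that the tight sets \(F'\) on \(T-t\) uncross. Uncrossing comes from submodularity of \(F\mapsto\nu(\bigcup_{F}E(A_e))\) on sets whose interaction is confined to a single bag, which is exactly where the tree structure is used. With uncrossing, a single maximal tight family determines which shared resources must be avoided. Matroid exchange then supplies representatives of \(A_e\) avoiding them, completing the induction.
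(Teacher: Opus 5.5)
\textbf{There is a genuine gap in the leaf step.} Your arguments for [SDR]$\Leftrightarrow$[Matching] and for necessity are fine. The leaf reduction is also sound: a $q$-SDR of $T$ is a choice of $q_e$ representatives of $A_e$, consuming some $Y\subseteq P_u$, together with a $q$-SDR of the system on $T-t$ that avoids $Y$. But showing that a good $Y$ exists is the whole theorem, and the tools you use for it fail.

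\emph{Uncrossing fails.} It rests on submodularity of $F\mapsto\nu(A(F))$, which is false already for tree-structured systems. Take the path $v_0v_1v_2v_3$ with $e_i=v_{i-1}v_i$, resources $a,b,c,d$ supported at $v_0,v_1,v_2,v_3$ respectively, and $E(A_{e_1})=\{\{a,b\}\}$, $E(A_{e_2})=\{\{b,c\}\}$, $E(A_{e_3})=\{\{c,d\}\}$. The sets $F_1=\{e_1,e_2\}$ and $F_2=\{e_2,e_3\}$ give $\nu(A(F_1))+\nu(A(F_2))=2<3=\nu(A(F_1\cup F_2))+\nu(A(F_1\cap F_2))$. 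The violated sets $F'\subseteq E(T)\setminus\{e\}$ in your argument can overlap anywhere in $T-t$. So no ``single bag'' restriction is available, and there is no single maximal tight family.

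\emph{The exchange is uncontrolled.} Without uncrossing, an alternating path that repairs one violated $F'$ can create a violation at some other $F''$. You exhibit no quantity that strictly decreases.

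\textbf{What is missing.} The matroid you identify is the easy one: the sets of shared resources consumable at $e$ form, after deleting pairs through loop resources, a rank-$q_e$ truncation $\mathcal T$ of a transversal matroid on $P_u$. The missing ingredient is on the other side. You need that the sets $Y\subseteq P_u$ tolerated by the system on $T-t$ form the independent sets of a matroid $M'$. You also need that every rank deficiency of $M'^*$ is witnessed by a correspondingly small vertex cover of some $A(F')$.

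With these, the leaf step becomes a matroid intersection of $\mathcal T$ and $M'$. An obstruction $X\subseteq P_u$ with $r_{\mathcal T}(X)+r_{M'}(P_u\setminus X)<q_e$ then yields a K\H{o}nig cover of $A_e$ of size at most $r_{\mathcal T}(X)+|P_u\setminus X|$. Combined with the witness for $M'$ at $X$, this gives a cover of $A(F'\cup\{e\})$ with fewer than $q(F')+q_e$ vertices, violating [Hall].

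The bare statement ``[Hall] implies [SDR]'' on smaller trees gives neither property, so the induction hypothesis has to be strengthened. This is what the paper does. Rooting $T$, it proves inductively that each reservation family $\mathcal I_t$ is a matroid $M_t$ and that $(E_t,P_t,M_t^*)$ has the defect cover property. It passes through a child edge via the Rado--Perfect matroid $M_u[K_e]$ with a truncation, and combines children via the union of consumption matroids. At each stage it uses [Hall] to rule out rank loss.
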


\begin{observation}
\label{obs:hall-special-case}
Theorem~\ref{thm:qSDR} contains the classical version of Hall's Marriage Theorem with multiplicities as a special case
where all representatives are loops.
\end{observation}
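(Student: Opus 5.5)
The plan is to realize an arbitrary finite set system as a tree-structured hypergraph system whose hypergraphs consist only of loops, and then read off [SDR]\(\Leftrightarrow\)[Hall] from Theorem~\ref{thm:qSDR}.

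\textbf{Statement to recover.} Let \(S_1,\dots,S_m\) be finite subsets of a finite ground set \(X\), and let \(q_1,\dots,q_m\) be positive integers. Classical Hall with multiplicities says the following are equivalent: one can choose \(q_i\) elements of \(S_i\) for every \(i\), with all \(\sum_i q_i\) chosen elements pairwise distinct; and
\[
\Bigl|\bigcup_{i\in I}S_i\Bigr|\ge\sum_{i\in I}q_i \quad\text{for every } I\subseteq\{1,\dots,m\}.
\]
Sets with \(q_i=0\) impose no constraint, so they may be discarded. The case \(m=0\) is trivial, so assume \(m\ge 1\).

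\textbf{Construction.} Let \(T\) be the star with center \(c\), leaves \(\ell_1,\dots,\ell_m\), and edges \(e_i=c\ell_i\). Take \(U:=X\). For \(x\in X\), let \(Q_x\) be the subtree induced by \(c\) together with every leaf \(\ell_i\) such that \(x\in S_i\).
\begin{itemize}
  \item Each \(Q_x\) is a subtree, since it is a substar containing the center. If \(x\) lies in no \(S_i\), then \(Q_x\) is the single vertex \(c\).
  \item The bags are \(P_c=X\) and \(P_{\ell_i}=S_i\), so \(P_c\cap P_{\ell_i}=S_i\).
\end{itemize}
Define \(A_{e_i}\) to be the hypergraph on \(U\) whose hyperedges are exactly the loops \(\{x\}\) for \(x\in S_i\), with no nonloop hyperedges. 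Its loops are then precisely the resources in \(P_c\cap P_{\ell_i}\), so \(\mathcal A=(A_{e_i})\) is a tree-structured hypergraph system in the sense of Definition~\ref{def:tree-system}. Set \(q(e_i):=q_i\).

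\textbf{Translation of the conditions.}
\begin{itemize}
  \item Two loops \(\{x\}\) and \(\{y\}\) are vertex disjoint exactly when \(x\ne y\). Hence a \(q\)-SDR of \(\mathcal A\) is precisely a choice of \(q_i\) elements from each \(S_i\), all pairwise distinct. This is the classical system of distinct representatives with multiplicities.
  \item For \(F=\{e_i:i\in I\}\), the edge set \(\bigcup_{e\in F}E(A_e)\) consists of the loops on \(\bigcup_{i\in I}S_i\). Any family of distinct loops is a matching, so
  \[
  \nu\Bigl(\bigcup_{e\in F}E(A_e)\Bigr)=\Bigl|\bigcup_{i\in I}S_i\Bigr|.
  \]
  Thus [Hall] becomes exactly the classical Hall inequality for every \(I\).
\end{itemize}
The equivalence [SDR]\(\Leftrightarrow\)[Hall] in Theorem~\ref{thm:qSDR} is therefore exactly Hall's theorem with multiplicities.

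The only step needing care is checking that the supports are genuine subtrees and that the loop sets come out exactly as \(P_c\cap P_{\ell_i}\). Both hold because every support contains the center \(c\).
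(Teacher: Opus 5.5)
Your proposal is correct and follows the paper's proof essentially verbatim. Both use the star with center \(c\) and one leaf per set, supports \(Q_x\) consisting of \(c\) plus the leaves of the sets containing \(x\), loop-only hypergraphs \(A_e\), and the identity \(\nu(A(F))=\bigl|\bigcup_{e\in F}S_e\bigr|\). Your extra remarks on discarding zero multiplicities and on elements lying in no \(S_i\) are harmless refinements; the paper sidesteps both by requiring \(q>0\) and taking \(U\) to be the union of the sets.
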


Indeed, every finite set system is realized by a loop-only system on a star.

For example, let \(E=\{e_1,e_2,e_3,e_4\}\) and \(U=\{w,x,y,z\}\), with all multiplicities equal to one.
\begin{figure}[ht]
\centering
\begin{tikzpicture}[
  treevertex/.style={circle,fill=black,minimum size=1.8mm,inner sep=0pt},
  graphvertex/.style={circle,draw=black,fill=white,minimum size=5.5mm,inner sep=0pt},
  every node/.style={font=\scriptsize}
]
  \node[anchor=east,font=\bfseries] at (-5.8,1.75) {(a)};
  \node[anchor=west,font=\itshape] at (-5.6,1.75) {The star \(T\) and its bags};
  \node[treevertex] (center) at (-2.7,0) {};
  \node[treevertex] (leaf1) at (-4.3,1.2) {};
  \node[treevertex] (leaf2) at (-4.3,.4) {};
  \node[treevertex] (leaf3) at (-4.3,-.4) {};
  \node[treevertex] (leaf4) at (-4.3,-1.2) {};
  \draw[black,line width=.55pt] (center)--node[above] {\(e_1\)} (leaf1);
  \draw[black,line width=.55pt] (center)--node[above] {\(e_2\)} (leaf2);
  \draw[black,line width=.55pt] (center)--node[below] {\(e_3\)} (leaf3);
  \draw[black,line width=.55pt] (center)--node[below] {\(e_4\)} (leaf4);
  \node[anchor=west] at (-2.55,0) {\(U=\{w,x,y,z\}\)};
  \node[anchor=east] at (-4.45,1.2) {\(S_{e_1}=\{w,x,y\}\)};
  \node[anchor=east] at (-4.45,.4) {\(S_{e_2}=\{x\}\)};
  \node[anchor=east] at (-4.45,-.4) {\(S_{e_3}=\{x,y,z\}\)};
  \node[anchor=east] at (-4.45,-1.2) {\(S_{e_4}=\{z,w\}\)};

  \node[anchor=east,font=\bfseries] at (1.45,1.75) {(b)};
  \node[anchor=west,font=\itshape] at (1.65,1.75) {The bipartite graph};
  \node at (1.5,1.3) {Indices};
  \node at (4.5,1.3) {Resources};
  \coordinate (be1) at (1.5,.8);
  \coordinate (be2) at (1.5,.2);
  \coordinate (be3) at (1.5,-.4);
  \coordinate (be4) at (1.5,-1);
  \coordinate (bw) at (4.5,.8);
  \coordinate (bx) at (4.5,.2);
  \coordinate (by) at (4.5,-.4);
  \coordinate (bz) at (4.5,-1);
  \draw[black,line width=.5pt] (be1)--(bx);
  \draw[black,line width=.5pt] (be1)--(by);
  \draw[black,line width=.5pt] (be3)--(bx);
  \draw[black,line width=.5pt] (be3)--(bz);
  \draw[black,line width=.5pt] (be4)--(bw);
  \draw[babyblue,line width=1pt] (be1)--(bw);
  \draw[babyblue,line width=1pt] (be2)--(bx);
  \draw[babyblue,line width=1pt] (be3)--(by);
  \draw[babyblue,line width=1pt] (be4)--(bz);
  \node[graphvertex] at (be1) {\(e_1\)};
  \node[graphvertex] at (be2) {\(e_2\)};
  \node[graphvertex] at (be3) {\(e_3\)};
  \node[graphvertex] at (be4) {\(e_4\)};
  \node[graphvertex] at (bw) {\(w\)};
  \node[graphvertex] at (bx) {\(x\)};
  \node[graphvertex] at (by) {\(y\)};
  \node[graphvertex] at (bz) {\(z\)};
\end{tikzpicture}
\caption{The star \(T\) and the corresponding bipartite graph; the blue edges form a perfect matching corresponding to the SDR \(\{w\in S_{e_1},x\in S_{e_2},y\in S_{e_3},z\in S_{e_4}\}\).}
\label{fig:hall-special-case}
\end{figure}


The tree \(T\) is a \emph{join tree} for the bag hypergraph \((P_t)_{t\in V(T)}\)~\cite{BeeriFaginMaierYannakakis}: the bags containing each \(x\in U\) are indexed by the vertices of the subtree \(Q_x\).
Conceptually, this representation is related to the point-tree hypergraphs of Aharoni, Berger and Ziv~\cite{AharoniBergerZiv}, in which each hyperedge consists of one point together with the vertices of a subtree.
Their theorem is a tree version of K\H{o}nig's theorem.
In this sense, Theorem~\ref{thm:qSDR} is a tree version of Hall's theorem for hypergraph SDRs.
Here, each \(e\in E(T)\) indexes a hypergraph \(A_e\) of singleton and pair representatives, and \(q_e\) pairwise vertex disjoint representatives must be chosen.

For each edge of \(T\), the proof recursively constructs a \emph{reservation matroid} recording which sets of resources can be reserved on one side of the edge; its dual, the \emph{consumption matroid}, records the minimal sets of resources consumed there.
Methodologically, this construction is related to Schrijver's linking systems~\cite{SchrijverLinking}, which transfer matroids through bipartite and more general linking relations.
The key additional step is that any obstruction to the recursive construction of these matroids yields a set \(F\subseteq E(T)\) with
\(\nu\bigl(\bigcup_{e\in F}E(A_e)\bigr)<\sum_{e\in F}q_e\), violating the Hall condition.

More generally, a system on a finite simple graph \(G\) is defined as in Definition~\ref{def:tree-system}, with \(G\) in place of tree \(T\) and each support \(Q_x\) required to be a connected subgraph of \(G\).
Together with Theorem~\ref{thm:qSDR}, the following theorem gives a complete classification of the index graphs for which [Hall] universally characterizes the existence of \(q\)-SDRs.
\begin{theorem}
\label{thm:forest-restriction}
Let \(G\) be a finite simple graph.
Then \(G\) is a forest if and only if, for every system on \(G\) and every multiplicity function \(q:E(G)\to\mathbb Z_{>0}\), the [Hall] condition in Theorem~\ref{thm:qSDR} implies the existence of a \(q\)-SDR.
\end{theorem}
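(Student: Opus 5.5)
The proof has two directions; the first is a reduction to Theorem~\ref{thm:qSDR}, and the second needs a counterexample built on a cycle.

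\emph{Forest implies Hall is sufficient.} Let \(G\) be a forest. First reduce to a single tree component. Each support \(Q_x\) is connected, so it lies inside one component of \(G\). Hence each component \(C\) uses only the resources whose supports lie in \(C\), and the resource sets of different components are disjoint. A \(q\)-SDR for the whole system is therefore just a union of \(q\)-SDRs, one for each component. The [Hall] condition restricted to subsets \(F\subseteq E(C)\) is exactly [Hall] for the system on \(C\). Applying Theorem~\ref{thm:qSDR} to each tree component and taking the union gives a \(q\)-SDR. Components without edges impose no requirement.

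\emph{Hall sufficient implies forest.} I argue by contrapositive. Suppose \(G\) contains a cycle \(C=v_1v_2\cdots v_kv_1\) with \(k\ge 3\). I build a system supported on \(C\): every edge of \(G\) outside \(C\) gets the empty hypergraph with \(q=1\). That choice would already violate Hall, so instead I handle those edges by giving each one a private loop resource, supported on that single edge, with \(q_e=1\). These private resources never interfere with anything else.

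On the cycle itself, take \(q_e=1\) for every \(e\in E(C)\) and use nonloop pair representatives. For each vertex \(v_i\), introduce one resource \(x_i\) with support \(Q_{x_i}=\{v_i\}\). For the edge \(e_i=v_iv_{i+1}\), let \(A_{e_i}\) consist of the single pair \(\{x_i,x_{i+1}\}\). An SDR must then choose \(k\) pairwise disjoint edges of a \(k\)-cycle on the \(x_i\), which is impossible. The trouble is that [Hall] also fails, because the full set \(F=E(C)\) has \(\nu=\lfloor k/2\rfloor<k\). So I need a subtler gadget in which every \(F\) satisfies Hall but a global parity or orientation obstruction remains.

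The natural choice is one resource per vertex, with \(A_{e_i}\) offering both loops \(\{x_i\}\) and \(\{x_{i+1}\}\) when the supports are enlarged to cover the edges. Then an SDR is an orientation of the cycle, which exists. To force an obstruction I would add resources of larger support so that local choices propagate around the cycle with a twist, in the style of a Möbius or parity gadget. For example, I would use two resources per vertex, \(a_i\) and \(b_i\), with \(A_{e_i}=\{\{a_i,a_{i+1}\},\{b_i,b_{i+1}\}\}\), and on one edge cross the pairs to \(\{a_i,b_{i+1}\},\{b_i,a_{i+1}\}\). Supports are single vertices, so the system is valid.

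This is the main obstacle: I have to tune the gadget, by adjusting the multiplicities and adding extra loop resources on supports that are paths of \(C\), until [Hall] holds for every \(F\subseteq E(C)\) while consistency around the cycle is contradictory. For a proper subset \(F\), the edges form paths, and the tree theorem (Theorem~\ref{thm:qSDR}) suggests these should be satisfiable. The task is therefore to verify \(\nu\ge|F|\) for \(F=E(C)\) while exhibiting the global contradiction. I would first try \(k=3\) by hand, then extend to general \(k\) by subdividing, replacing a path of the cycle by edges whose hypergraphs just transmit the same resource choice.
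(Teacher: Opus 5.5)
Your forest direction is correct and is the same component-by-component reduction to Theorem~\ref{thm:qSDR} that the paper uses. The gap is in the converse. You never produce a system that satisfies [Hall] but has no \(q\)-SDR, and the concrete gadget you propose cannot be the general answer.

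Take \(a_i,b_i\) supported at \(v_i\) and \(q\equiv1\). Without a crossing, the pairs over \(E(C)\) form two disjoint \(k\)-cycles. With one crossed edge, they form the single \(2k\)-cycle \(a_1a_2\cdots a_kb_1\cdots b_ka_1\), and the two pairs of \(A_{e_i}\) sit at positions \(i\) and \(i+k\).
\begin{itemize}
\item For even \(k\), the crossed gadget is in fact a valid counterexample. Here \(\nu=k\), and proper subsets of \(E(C)\) give paths, where \(\nu\ge|F|\) is easy. Yet each of the two perfect matchings of the \(2k\)-cycle takes both or neither pair of every \(A_{e_i}\), so there is no \(q\)-SDR.
\item For odd \(k\), the crossed gadget has a \(q\)-SDR: take every other edge of the \(2k\)-cycle. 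The uncrossed one violates [Hall] at \(F=E(C)\), since \(\nu=k-1\). Further crossings only alternate between these two cases.
\end{itemize}
A graph such as \(K_3\) has no even cycle, so this does not prove the theorem. Your planned \(k=3\) case is exactly where the gadget breaks. Your extension to general \(k\) also runs into the same parity: transmitting a choice by pairs between single-vertex supports, as in your gadget, flips the choice at every vertex.

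The missing idea is the one thing a cycle allows and a tree forbids: a connected support that contains both neighbours of a vertex but not the vertex itself. The paper proceeds as follows.
\begin{itemize}
\item Take consecutive cycle edges \(e_1=uv\) and \(e_2=vw\), and let \(R\) be the \(u\)--\(w\) path formed by the rest of the cycle.
\item Introduce resources \(a,b,c,d\) with \(Q_a=\{u\}\), \(Q_b=Q_d=\{v\}\) and \(Q_c=R\).
\item Set \(E(A_{e_1})=\{\{a,b\},\{c,d\}\}\), \(E(A_{e_2})=\{\{b,c\}\}\), and \(q\equiv1\).
\item Every other edge \(z\) gets a private loop \(\{p_z\}\). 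When both ends of \(z\) lie in \(V(R)\), it also gets the loop \(\{c\}\), because condition (i) of Definition~\ref{def:tree-system} forces this.
\end{itemize}
That last point means your claim that private resources ``never interfere'' needs this extra check once supports longer than a single vertex appear.

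[Hall] then holds for every \(F\). Take the loops \(\{p_z\}\) together with \(\{a,b\}\), \(\{b,c\}\), or \(\{a,b\},\{c,d\}\), according to \(F\cap\{e_1,e_2\}\). On the other hand, the only member \(\{b,c\}\) of \(A_{e_2}\) meets both members of \(A_{e_1}\), so no \(q\)-SDR exists. No parity is involved, so the construction works for every cycle length.
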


Theorem~\ref{thm:qSDR} also yields a min--max formula and a polynomial time feasibility test; see Corollaries~\ref{cor:deficiency} and~\ref{cor:polynomial-algorithm}.

Our first application concerns chordal graphs.
Chv\'atal introduced \emph{toughness} to measure how hard it is to break a graph into many components~\cite{Chvatal}.
Chen, Jacobson, K\'{e}zdy and Lehel~\cite{ChenJacobsonKezdyLehel} (1998) obtained the toughness bound \(18\) for Hamiltonicity of chordal graphs, and Kabela and Kaiser~\cite{KabelaKaiser} (2017) obtained the bound \(10\) for Hamilton-connectedness.
By replacing the sufficient Hall condition \cite{AharoniHaxell} used in the latter work with the necessary and sufficient condition in Theorem~\ref{thm:qSDR}, we lower this bound to \(5\).
\begin{theorem}
\label{thm:five-tough-hamilton-connected}
Every \(5\)-tough chordal graph on at least three vertices is Hamilton-connected.
\end{theorem}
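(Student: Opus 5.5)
We follow the strategy of Kabela and Kaiser for the bound \(10\), replacing their use of the Aharoni--Haxell sufficient condition by the [Hall] condition of Theorem~\ref{thm:qSDR}.

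\textbf{Setting up the system.} Let \(G\) be a \(5\)-tough chordal graph on at least three vertices, and fix distinct endpoints \(a,b\). We may assume \(G\) is not complete, since complete graphs on at least three vertices are Hamilton-connected. Because \(G\) is chordal, it has a clique tree \(T\): its nodes are the maximal cliques \(C_t\) of \(G\), and for every vertex \(x\) the cliques containing \(x\) induce a subtree \(Q_x\). We take \(U=V(G)\) as the set of resources, with supports \(Q_x\), so the bag at \(t\) is \(P_t=C_t\).

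The Kabela--Kaiser reduction says that a Hamiltonian \(a\)--\(b\) path exists provided that, for every edge \(e=tu\) of \(T\), we can choose \(q_e\) pairwise disjoint ``connectors'' across the separator \(C_t\cap C_u\). A connector is either a single vertex of \(C_t\cap C_u\) (a loop) or a pair \(\{x,y\}\) with \(x\in C_t\) and \(y\in C_u\). The connectors are then glued, clique by clique, into Hamiltonian paths of the cliques, which are joined into one path with the prescribed ends \(a,b\). The multiplicities are small constants: \(q_e=2\) in general, possibly adjusted near the cliques containing \(a\) and \(b\). This gives exactly a tree-structured hypergraph system \(\mathcal A\) in the sense of Definition~\ref{def:tree-system}. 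We will rederive this reduction carefully, checking that the gluing only needs a \(q\)-SDR and that each \(A_e\) has the allowed shape. If \(T\) has to be adjusted, for example by subdividing or rooting it near \(a\) and \(b\), we will verify that the subtree property of the supports is preserved.

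\textbf{Verifying the Hall condition.} By Theorem~\ref{thm:qSDR}, it suffices to show
\[
\nu\Bigl(\bigcup_{e\in F}E(A_e)\Bigr)\ \ge\ \sum_{e\in F}q_e \qquad\text{for every } F\subseteq E(T).
\]
Suppose this fails for some \(F\). Take a maximum matching \(M\) of \(\bigcup_{e\in F}E(A_e)\), and let \(S=V(M)\), so \(|S|\le 2|M|<2\sum_{e\in F}q_e\).

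Deleting \(S\) destroys every representative available on the edges of \(F\). This forces many components in \(G-S\): for each edge \(e=tu\in F\), the separator \(C_t\cap C_u\) is contained in \(S\), and no edge of \(G-S\) joins the \(C_t\)-side to the \(C_u\)-side. The sides of \(F\) therefore become separated.

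Cutting \(T\) along \(F\) produces \(|F|+1\) subtrees. We will show that at least roughly \(|F|/2\) of the corresponding pieces of \(G-S\) are nonempty and lie in distinct components. This uses maximality of cliques, since each clique has a private vertex outside the separator, and we will need to discount pieces consisting only of vertices of \(S\). Then
\[
\frac{|S|}{c(G-S)} < \frac{2\sum_{e\in F} q_e}{c(G-S)} \le 5,
\]
which contradicts \(5\)-toughness.

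\textbf{Main obstacle.} The critical step is this counting. We must bound the number of empty or merged pieces against \(\sum_{e\in F}q_e\), with constants sharp enough to reach \(5\) and not merely \(10\). The two sides of each edge must also be handled asymmetrically when loops and pairs mix.

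The first thing we would try is a charging argument. Each edge of \(F\) charges to the component on its ``outer'' side, with respect to a root of \(T\). Edges whose outer piece is empty are grouped with the next edge of \(F\) further out, which ensures every charged component receives at most a bounded total multiplicity. We will also check separately the case where \(c(G-S)\le 1\). That case should be excluded directly by the maximality of \(M\), together with the fact that the separators in \(F\) are cliques of size at least \(5\), which follows from toughness.
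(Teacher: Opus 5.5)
There is a genuine gap, and it is in the setup rather than in the counting you flag as the main obstacle. On a clique tree, the system you describe does not satisfy the Hall condition for all \(5\)-tough chordal graphs, so no charging argument can produce the contradiction. Reading a \(q\)-SDR off as a Hamilton path is fine there; the problem is that the system demands too much. If \(x\in C_t\setminus C_u\) and \(y\in C_u\setminus C_t\), then \(Q_x\) and \(Q_y\) lie on opposite sides of \(tu\), so \(xy\notin E(G)\). Hence every pair connector contains a separator vertex and is dominated by a loop, and your system reduces to classical Hall with multiplicity \(2\) on the separators \(C_t\cap C_u\).

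Now take \(G\) on \(\{1,\dots,n\}\) with \(ij\in E(G)\) if and only if \(|i-j|\le 10\). This is an interval graph. It is \(5\)-tough: consecutive components of \(G-S\) are separated by runs of at least \(10\) deleted vertices, so \(|S|\ge 10(c(G-S)-1)\ge 5c(G-S)\). Its only clique tree is the path of cliques \(K_i=\{i,\dots,i+10\}\). The separators of the first \(m\) edges have union \(\{2,\dots,m+10\}\), of size \(m+9<2m\) for \(m\ge 10\). With \(n\) large and \(a,b\) in a central clique, all these edges carry \(q_e=2\), so the Hall condition genuinely fails. In terms of your counting, each internal clique satisfies \(K_i=(K_{i-1}\cap K_i)\cup(K_i\cap K_{i+1})\subseteq S\). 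So cutting along these \(m\) edges leaves at most two nonempty pieces, not about \(m/2\).

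What is missing is the Kabela--Kaiser structure that the paper uses.
\begin{itemize}
\item Take a subtree representation with \(|V(T_0)|\) minimum.
\item Remove from the resource set a maximal independent set \(I\) of vertices whose subtrees are paths through nodes of degree at most two and properly contain no represented subtree.
\item Suppress the degree-two nodes that are not endpoints of these paths.
\end{itemize}
Suppression is what creates genuine pair hyperedges spanning long stretches of \(T_0\). In the example above, one gray edge with \(q_e=1\) replaces ten consecutive clique-tree edges that each demanded two separator vertices. Black edges get \(q_e=2\), and a gray edge \(e=Q_{i_e}\) gets \(q_e=1\), its second traversal going through \(i_e\).

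Since vertices of \(I\) are not resources, they never lie in the cover \(C\). Consequently every component of \(T-B\) containing a node of degree at most two yields its own component of \(G-S\); these components are refined along covered gray edges, whose \(i_e\) are added to \(S\). The estimate \(5(r-1)\ge 2b_0+3b_1+5b_2\) then gives \(5c(G-S)-|S|\ge 5+q(F_C)-|C|\).

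Two further steps of your plan would also fail. First, taking \(S=V(M)\) costs a factor \(2\). The paper instead notes that resources without loops in \(A(F_0)-Z\) have supports inside components of \(T-F_0\), so the remaining pairs form a bipartite graph, and K\H{o}nig's theorem gives a cover of size \(\nu\le q(F_0)-1\). Second, your final display needs \(c(G-S)\ge\tfrac25\sum_{e\in F}q_e=\tfrac45|F|\). Roughly \(|F|/2\) components does not provide this; it only bounds the ratio by \(8\).
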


Chordal graphs are one specific class to which Theorem~\ref{thm:qSDR} can be applied.
More broadly, the if and only if Hall condition serves as a general tool that can be specialized, under additional structural assumptions, to problems in other graph classes.

Our second application concerns full rainbow matchings.
The loopless case of Theorem~\ref{thm:qSDR} yields a polynomial time algorithm for full rainbow matching (Corollary~\ref{cor:prescribed-colors}), while its loop-only case recovers Hall's Marriage Theorem with multiplicities (Observation~\ref{obs:hall-special-case}).
\section{\texorpdfstring{A Hall-type Theorem for \(q\)-SDRs}{A Hall-type Theorem for q-SDRs}}
This section proves Theorem~\ref{thm:qSDR}.
We retain the tree \(T\), resource set \(U\), supports \((Q_x)_{x\in U}\), and bags \((P_t)_{t\in V(T)}\) defined in the introduction.

\subsection{Preliminaries}
Throughout this section, a loop is regarded as a singleton hyperedge, both for matchings and for vertex covers.
For a hypergraph \(H\) and \(X\subseteq V(H)\), write \(H-X\) for the hypergraph obtained by deleting the vertices in \(X\) and every hyperedge meeting \(X\).
We write \(\nu(H)\) for the maximum size of a matching in a hypergraph \(H\) and \(\tau(H)\) for the minimum size of a vertex cover of \(H\).
For a matroid \(M\), let \(r_M\) denote its rank function.
For a matroid \(M\) on ground set \(E\) and \(0\le k\le r_M(E)\), let \(\operatorname{Tr}_k(M)\) denote the truncation of \(M\) to rank \(k\), whose independent sets are the independent sets of \(M\) of size at most \(k\).

We also use the following three standard facts from matroid theory:

For matroids \(J_1,\ldots,J_m\) on a common ground set, we use the finite matroid union rank formula~\cite{EdmondsFulkerson}
\begin{equation}
\label{eq:matroid-union-rank}
  r_{J_1\vee\cdots\vee J_m}(X)=\min_{W\subseteq X}\left(|X\setminus W|+\sum_{i=1}^{m}r_{J_i}(W)\right).
\end{equation}

The Rado--Perfect theorem~\cite{Rado,Perfect} states that, if \(K\) is bipartite with classes \(L,R\) and \(M\) is a matroid on \(R\), then the subsets of \(L\) matchable to an \(M\)-independent subset of \(R\) form a matroid \(M[K]\), with
\begin{equation}
\label{eq:rado-perfect-rank}
  r_{M[K]}(H)=\min_{A\subseteq H}\bigl(|H\setminus A|+r_M(N_K(A))\bigr) \qquad \text{for all } H\subseteq L.
\end{equation}

For a matroid \(J\) on \(E\), we also use the dual rank identity
\begin{equation}
\label{eq:dual-rank}
  r_{J^*}(X)=|X|-r_J(E)+r_J(E\setminus X) \qquad \text{for all } X\subseteq E.
\end{equation}

The following definition describes possible choices of representatives on a tree: each resource is available throughout a connected subtree, and the hypergraph associated with an edge \(e=tu\) records choices involving resources available at \(t\) and \(u\).

\begin{definition}
\label{def:tree-system}
A family \(\mathcal A=(A_e)_{e\in E(T)}\) is a \emph{tree-structured hypergraph system} if, for every \(e=tu\), \(A_e\) is a hypergraph on \(U\), and the following conditions hold:
\begin{enumerate}
  \item[(i)] the resources carrying loops in \(A_e\) are the elements of \(P_t\cap P_u\);
  \item[(ii)] every nonloop hyperedge of \(A_e\) is of the form \(\{x,y\}\), where \(x\in P_t\) and \(y\in P_u\).
\end{enumerate}
\end{definition}

\begin{figure}[ht]
\centering
\begin{tikzpicture}[
  treevertex/.style={
    circle,draw=black!75,fill=white,minimum size=3.8mm,inner sep=0pt
  },
  resource/.style={
    circle,draw=black!75,fill=white,minimum size=5.5mm,inner sep=0pt
  },
  every node/.style={font=\small}
]
  \node[anchor=east,font=\bfseries] at (-4.55,2.75) {(a)};
  \node[anchor=west,font=\itshape] at (-4.35,2.75)
    {Resource supports in \(T\)};

  \draw[black,line width=6pt,opacity=.18,line cap=round]
    (-4.2,1.75)--(-1.75,1.75)--(-1.75,.55);
  \draw[babyblue,line width=6pt,opacity=.32,line cap=round]
    (-1.75,1.75)--(1.75,1.75);
  \draw[black!45,line width=6pt,opacity=.25,line cap=round]
    (1.75,.55)--(1.75,1.75)--(4.2,1.75);

  \draw[black!75,line width=.65pt]
    (-4.2,1.75)--(-1.75,1.75)--(1.75,1.75)--(4.2,1.75)
    (-1.75,1.75)--(-1.75,.55)
    (1.75,1.75)--(1.75,.55);
  \node[treevertex] at (-4.2,1.75) {};
  \node[treevertex,label=above:\(t\)] at (-1.75,1.75) {};
  \node[treevertex,label=above:\(u\)] at (1.75,1.75) {};
  \node[treevertex] at (4.2,1.75) {};
  \node[treevertex] at (-1.75,.55) {};
  \node[treevertex] at (1.75,.55) {};

  \node[text=black] at (-3.15,2.08) {\(Q_x\)};
  \node[text=babyblue] at (0,2.08) {\(Q_s\)};
  \node[text=black!55] at (3.15,2.08) {\(Q_y\)};
  \node[font=\footnotesize] at (0,1.43) {\(e=tu\)};

  \node[anchor=east,font=\bfseries] at (-4.55,-.25) {(b)};
  \node[anchor=west,font=\itshape] at (-4.35,-.25)
    {Endpoint bags and the local hypergraph};

  \draw[black!80,thick,fill=black!4]
    (-2.45,-1.75) ellipse (1.9cm and 1.05cm);
  \draw[black!45,thick,fill=black!8,fill opacity=.72]
    (-.25,-1.75) ellipse (1.9cm and 1.05cm);
  \node[text=black] at (-3.15,-1) {\(P_t\)};
  \node[text=black!55] at (.45,-1) {\(P_u\)};

  \node[resource,text=black] at (-3.15,-1.75) {\(x\)};
  \node[resource,text=babyblue] at (-1.35,-1.75) {\(s\)};
  \node[resource,text=black!55] at (.45,-1.75) {\(y\)};
  \node[font=\scriptsize] at (-3.15,-2.43) {\(P_t\setminus P_u\)};
  \node[font=\scriptsize] at (-1.35,-2.43) {\(P_t\cap P_u\)};
  \node[font=\scriptsize] at (.45,-2.43) {\(P_u\setminus P_t\)};

  \draw[-{Latex[length=2mm]},thick] (1.75,-1.75)--(2.5,-1.75);
  \node[font=\bfseries] at (4.65,-.9) {\(A_{tu}\)};

  \node[resource,text=babyblue] (loops) at (3.75,-1.55) {\(s\)};
  \draw[babyblue,thick]
    (loops.135) .. controls +(-5.5mm,3.6mm) and +(-5.5mm,-3.6mm) ..
    (loops.225);
  \node[anchor=west,font=\footnotesize] at (4.2,-1.55)
    {\(\{s\}\): prescribed loop};

  \node[resource,text=black] (pairx) at (3.05,-2.35) {\(x\)};
  \node[resource,text=black!55] (pairy) at (4.05,-2.35) {\(y\)};
  \draw[thick] (pairx)--(pairy);
  \node[anchor=west,font=\footnotesize] at (4.4,-2.35)
    {\(\{x,y\}\): allowed pair};
\end{tikzpicture}
\caption{Local structure of a tree-structured hypergraph system at \(e=tu\).
Note that \(\{s\}\) is a prescribed loop of \(A_{tu}\), whereas \(\{x,y\}\) is an allowed, but not required, nonloop hyperedge.}
\label{fig:tree-system}
\end{figure}

\begin{definition}
\label{def:qSDR}
Let \(\mathcal A=(A_e)_{e\in E(T)}\) be a tree-structured hypergraph system, and let \(q:E(T)\to\Z_{>0}\) be a multiplicity function.
A selected hyperedge \emph{consumes} its resources.
A \emph{\(q\)-SDR} for \(\mathcal A\) chooses \(q_e := q(e)\) hyperedges of \(A_e\) for every \(e\in E(T)\), with all chosen hyperedges pairwise vertex disjoint; that is, no resource is consumed twice.
For \(F\subseteq E(T)\), define the hypergraph \(A(F)\) on \(U\) and the total multiplicity \(q(F)\), respectively, by
\[
  E(A(F))=\bigcup_{e\in F}E(A_e), \qquad q(F)=\sum_{e\in F}q_e.
\]
\end{definition}

\begin{proof}[Proof of Observation~\ref{obs:hall-special-case}]
Let \((S_e)_{e\in E}\) be a finite set system with union \(U\) and multiplicities \((q_e)_{e\in E}\).
Let \(T\) be the star with center \(c\), leaves \(v_e\) indexed by \(e\in E\), and edge \(e=cv_e\).
For each \(x\in U\), let \(Q_x\) be the subtree consisting of \(c\), the leaves \(v_e\) for which \(x\in S_e\), and their incident edges.
Then \(P_c=U\) and \(P_{v_e}=S_e\).
For each \(e\in E\), let \(A_e\) consist of the loops \(\{x\}\) with \(x\in S_e\).
Since \(P_c\cap P_{v_e}=S_e\), \(\mathcal A\) is a tree-structured hypergraph system.
A \(q\)-SDR for this system is therefore a choice of \(q_e\) representatives from each \(S_e\), with all chosen representatives distinct.
Moreover,
\[
  \nu(A(F))=\left|\bigcup_{e\in F}S_e\right| \qquad \text{for every \(F\subseteq E\)}.
\]
Thus [Hall] in Theorem~\ref{thm:qSDR} is exactly the standard Hall condition with multiplicities.
\end{proof}

For a rooted tree \(T\), write \(s\preceq t\) if \(s\) is an ancestor of \(t\).
For \(t\in V(T)\), let \(T_t\) be the descendant subtree induced by \(\{u:t\preceq u\}\) and write \(E_t:=E(T_t)\).

\subsection{Recursive Setup and Proof of Theorem~\ref{thm:qSDR}}
\begin{definition}
\label{def:reservation}
We say a subset \(X\) of \(P_t\) is \emph{reservable} if the subsystem \((A_e)_{e\in E_t}\), with multiplicities \((q_e)_{e\in E_t}\), admits a \(q\)-SDR that consumes no resource of \(X\).
Define the \emph{reservation family} at \(t\) by
\[
  \mathcal I_t := \left\{X\subseteq P_t:\text{\(X\) is reservable}\right\}.
\]
Whenever \(\mathcal I_t\) is the independent set family of a matroid, we denote this matroid by \(M_t\) and call it the \emph{reservation matroid} at \(t\).
Its dual \(C_t:=M_t^*\) is called the \emph{consumption matroid} at \(t\).
\end{definition}

\begin{lemma}
\label{lem:semantic}
Let \(t\in V(T)\), and suppose that the reservation family \(\mathcal I_t\) is the family of independent sets of \(M_t\).
Then \(B\subseteq P_t\) is a basis of \(C_t=M_t^*\) if and only if \(B\) is inclusion-minimal among the sets of resources in \(P_t\) consumed by \(q\)-SDRs for \(E_t\).
\end{lemma}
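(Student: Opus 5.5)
The plan is to pass through complements: bases of $C_t=M_t^*$ are exactly the sets $P_t\setminus X$ with $X$ a basis of $M_t$. A basis of $M_t$ is an inclusion-maximal reservable set. Since $\mathcal I_t$ is the independent family of a matroid, "maximal" and "maximum" coincide, but we will only need maximality. So it suffices to show
\[
\{P_t\setminus X : X \text{ maximal in } \mathcal I_t\}
\]
equals the family of inclusion-minimal sets among
\[
\mathcal D_t:=\{D\subseteq P_t: D=\text{(resources of $P_t$ consumed by some $q$-SDR for $E_t$)}\}.
\]

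The link between the two families is the following observation. A set $X\subseteq P_t$ is reservable if and only if there is some $D\in\mathcal D_t$ with $D\subseteq P_t\setminus X$. Indeed, a $q$-SDR consuming no resource of $X$ consumes, inside $P_t$, a set $D$ disjoint from $X$, and the converse is immediate. Hence $\mathcal I_t=\{X\subseteq P_t : P_t\setminus X\supseteq D \text{ for some } D\in\mathcal D_t\}$, so complementation carries $\mathcal I_t$ onto the up-closure of $\mathcal D_t$ within $P_t$.

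Complementation reverses inclusion, so maximal members of $\mathcal I_t$ correspond to minimal members of the up-closure of $\mathcal D_t$. The minimal members of an up-closure are exactly the minimal members of the generating family. Indeed, suppose $Y$ is minimal in the up-closure. Then $Y\supseteq D$ for some $D\in\mathcal D_t$, and $D$ lies in the up-closure, so minimality forces $Y=D$; any $D'\in\mathcal D_t$ with $D'\subsetneq D$ would contradict minimality. Conversely, suppose $D$ is minimal in $\mathcal D_t$ and some $Y\subsetneq D$ lies in the up-closure. Then $Y\supseteq D'$ for some $D'\in\mathcal D_t$, giving $D'\subsetneq D$, which contradicts minimality.

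Combining these steps, $B$ is a basis of $C_t$ if and only if $P_t\setminus B$ is a basis of $M_t$, if and only if $B$ is minimal in $\mathcal D_t$. Nothing here is a real obstacle. The only point needing care is to restrict consumed sets to $P_t$: the statement concerns resources in $P_t$, and a $q$-SDR for $E_t$ may also consume resources outside $P_t$, lying in bags of descendants, which are irrelevant to reservability of subsets of $P_t$. We also need the standard fact that bases of $M^*$ are complements of bases of $M$, where bases are the maximal independent sets.
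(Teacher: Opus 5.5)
Your proposal is correct and follows the paper's proof: you use that bases of $C_t=M_t^*$ are the complements of maximal reservable sets, and then turn maximality in $\mathcal I_t$ into minimality of the consumed set inside $P_t$. The only difference is that you spell out the up-closure step, which the paper compresses into a single sentence.
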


\begin{proof}
A set \(B\subseteq P_t\) is a basis of \(C_t\) if and only if \(P_t\setminus B\) is a basis of \(M_t\), equivalently, an inclusion-maximal set of \(\mathcal I_t\).
By the definition of \(\mathcal I_t\), this holds if and only if some \(q\)-SDR for \(E_t\) consumes exactly \(B\) among the resources of \(P_t\), while no \(q\)-SDR consumes exactly a proper subset of \(B\) there.
This is precisely the inclusion-minimality of \(B\).
\end{proof}

Definition~\ref{def:reservation} and Lemma~\ref{lem:semantic} also apply to the subsystems in the proof of Theorem~\ref{thm:qSDR}.

\begin{definition}
\label{def:defect-cover}
Let \(E'\subseteq E(T)\), let \(P\subseteq U\), and let \(C\) be a matroid on \(P\).
For \(Z\subseteq P\), define the \emph{rank defect} of \(Z\) by
\[
  \defect_C(Z):=r_C(P)-r_C(Z).
\]
The triple \((E',P,C)\) has the \emph{defect cover property} if, for every \(Z\subseteq P\), there exist \(F\subseteq E'\) and a vertex cover \(D\) of \(A(F)-(P\setminus Z)\) such that
\[
  |D|\le q(F)-\defect_C(Z).
\]
\end{definition}

The defect cover property is the inductive tool that allows us to use [Hall] throughout the recursive construction.
Informally, it ensures that if the construction cannot accommodate the required representatives at some step, then some subfamily violates [Hall].

\begin{lemma}
\label{lem:reduction}
For each \(e\in E(T)\), let \(A'_e\) be obtained from \(A_e\) by deleting every nonloop hyperedge containing a resource that carries a loop in \(A_e\), and write \(\mathcal A':=(A'_e)_{e\in E(T)}\).
Then
\[
  \nu(A'(F))=\nu(A(F)) \qquad \text{for all } F\subseteq E(T),
\]
and \(\mathcal A\) admits a \(q\)-SDR if and only if \(\mathcal A'\) does.
\end{lemma}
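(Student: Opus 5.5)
The plan is a local exchange argument: whenever a resource \(x\) carries a loop in \(A_e\), any nonloop hyperedge \(\{x,y\}\in E(A_e)\) can be replaced by the loop \(\{x\}\in E(A_e)\). The replacement consumes fewer resources and stays inside the same \(A_e\). Both claims of the lemma follow from this, one direction of each being immediate from \(E(A'_e)\subseteq E(A_e)\).

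\emph{Matching numbers.} Fix \(F\subseteq E(T)\). Since \(E(A'(F))\subseteq E(A(F))\), we have \(\nu(A'(F))\le\nu(A(F))\).

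For the reverse inequality, take a maximum matching \(N\) of \(A(F)\), and choose it to minimize the number of its hyperedges that lie outside \(E(A'(F))\). Suppose some \(a\in N\) is not in \(A'(F)\). Since \(a\in E(A_e)\) for some \(e\in F\), it is a nonloop \(\{x,y\}\) of \(A_e\) in which one resource, say \(x\), carries a loop of \(A_e\). Then \(\{x\}\in E(A'_e)\), because loops are never deleted.

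Replace \(a\) by \(\{x\}\). The new family is still a matching, since \(\{x\}\subseteq a\) and so disjointness is preserved. It has the same size, and it has strictly fewer hyperedges outside \(A'(F)\). This contradicts the choice of \(N\), so \(N\) is a matching of \(A'(F)\) and \(\nu(A'(F))\ge\nu(A(F))\).

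One subtlety needs a remark. A hyperedge \(\{x,y\}\) may belong to \(A_e\) for several \(e\in F\), and it may be deleted in some \(A'_e\) but not in others. If it survives in some \(A'_{e'}\) with \(e'\in F\), it already lies in \(A'(F)\) and nothing needs to be done. So "not in \(A'(F)\)" means it was deleted for every \(e\in F\) containing it, and we pick any one such \(e\).

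\emph{SDRs.} A \(q\)-SDR for \(\mathcal A'\) is a \(q\)-SDR for \(\mathcal A\), again because \(E(A'_e)\subseteq E(A_e)\).

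Conversely, a \(q\)-SDR chooses, for each \(e\), \(q_e\) pairwise disjoint hyperedges of \(A_e\), with disjointness across all choices. Perform the same exchange on each chosen nonloop \(\{x,y\}\) from \(A_e\) that is deleted in \(A'_e\): replace it by the loop \(\{x\}\) of \(A_e\). This new choice is distinct from the other representatives chosen for \(e\), since those avoid \(x\) by disjointness.

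Each step keeps all chosen hyperedges pairwise disjoint, keeps \(q_e\) choices attributed to \(e\), and strictly decreases the number of deleted hyperedges chosen. After finitely many steps we obtain a \(q\)-SDR for \(\mathcal A'\).

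I expect no real obstacle here. The only point needing care is the bookkeeping that keeps track of which \(A_e\) each chosen hyperedge is attributed to, as in the remark above.
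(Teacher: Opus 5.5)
Your proof is correct and follows the paper's own argument: replace a deleted pair \(\{x,y\}\in E(A_e)\) by the loop \(\{x\}\) of the same \(A_e\), which preserves cardinality, disjointness, and the index \(e\) being represented. Your remark about a pair lying in several \(A_e\) with \(e\in F\) only makes explicit bookkeeping that the paper leaves implicit.
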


\begin{proof}
In any matching, a deleted hyperedge \(\{x,y\}\in E(A_e)\), where \(x\) carries a loop in \(A_e\), may be replaced by that loop.
This preserves cardinality and disjointness; for a \(q\)-SDR, it also preserves the hypergraph \(A_e\) being represented.
Hence matching numbers and the existence of a \(q\)-SDR are unchanged.
\end{proof}

\begin{proof}[Proof of Theorem~\ref{thm:qSDR}]
Projecting each nonsingleton hyperedge \(\{z_{e,i}\}\cup a\) of a perfect matching of \(H(\mathcal A,q)\) to \(a\) gives a \(q\)-SDR.
Conversely, assigning the \(q_e\) selected representatives of \(A_e\) bijectively to \(z_{e,1},\ldots,z_{e,q_e}\) and adding singleton hyperedges on all unused resources gives a perfect matching of \(H(\mathcal A,q)\).
Thus \textup{[SDR]} and \textup{[Matching]} are equivalent, and it remains to prove that \textup{[SDR]} is equivalent to [Hall].

Necessity is immediate.
We prove sufficiency.

\subsubsection{Inductive Setup}
Fix a root \(\rho\in V(T)\).

By Lemma~\ref{lem:reduction}, we replace every \(A_e\) by \(A'_e\) for the remainder of the proof.

We now prove, by induction upward from the leaves of \(T\), that every \(\mathcal I_t\) is the family of independent sets of a reservation matroid \(M_t\) and that \((E_t,P_t,C_t)\) has the defect cover property.

If \(t\) is a leaf, then \(E_t=\emptyset\), so every subset of \(P_t\) is reservable.
Hence \(M_t\) is the free matroid on \(P_t\), while \(C_t\) has rank zero.
The defect cover property holds with \(F=D=\emptyset\).

Now let \(t\) be an internal vertex and assume that \(M_u\) and \(C_u\) have been constructed for every child \(u\) of \(t\).
In particular, Lemma~\ref{lem:semantic} applies to each \(C_u\).
For every child edge \(e=tu\), we first construct a reservation matroid for the subsystem indexed by \(\{e\}\cup E_u\), and then take the union of the dual consumption matroids of these subsystems.

\subsubsection{Construct the Reservation Matroid for a Child Edge}
We use a matroidal matching problem to encode which selections of \(q_e\) representatives from \(A_e\) can be combined with a \(q\)-SDR for \(E_u\).
Fix a child edge \(e=tu\), and write
\[
  L:=P_t,\qquad
  R:=P_u,\qquad
  S:=L\cap R,\qquad
  L_0:=L\setminus S,\qquad
  R_0:=R\setminus S.
\]
By Lemma~\ref{lem:reduction}, \(A_e\) consists of the loops on \(S\) and nonloop hyperedges between \(L_0\) and \(R_0\).

For each \(s\in S\), replace the loop at \(s\) by an edge \(s_Ls_R\), where \(s_L\) is placed in the left class and \(s_R\) in the right class.
Construct \(L\) as the union of \(L_0\) and all such vertices \(s_L\), and construct \(R\) as the union of \(R_0\) and all such vertices \(s_R\).
The resulting sets \(L\) and \(R\) are disjoint, and the nonloop hyperedges of \(A_e\) become edges joining \(L_0\) to \(R_0\).
This defines a bipartite graph \(K_e\) with bipartition \(L\sqcup R\).
Here \(s_L\) and \(s_R\) are distinct vertices of \(K_e\) representing the same resource \(s\); in set and matroid notation, we treat both as \(s\).

By the Rado--Perfect theorem \eqref{eq:rado-perfect-rank}, the subsets of \(L\) that can be matched to an \(M_u\)-independent subset of \(R\) form the independent sets of a matroid \(M_u[K_e]\) on \(L\), with
\begin{equation}
\label{eq:rado-rank}
  r_{M_u[K_e]}(H)=\min_{A\subseteq H}\left(|H\setminus A|+r_{M_u}(N_{K_e}(A))\right) \qquad \text{for all }H\subseteq L.
\end{equation}

Let \(N:=M_u[K_e]\).
Suppose for contradiction that \(r_N(L)<q_e\).

Choose \(A_L\subseteq L\) attaining \eqref{eq:rado-rank} for \(H=L\), and let \(Y_L:=N_{K_e}(A_L)\).
Then \((L\setminus A_L)\cup Y_L\) covers \(A_e\), and
\[
  r_N(L)=|L\setminus A_L|+r_{M_u}(Y_L)
  <q_e.
\]
Since \(C_u=M_u^*\), the dual rank identity \eqref{eq:dual-rank} gives
\[
  r_{C_u}(R)=|R|-r_{M_u}(R), \qquad
  r_{C_u}(R\setminus Y_L)=|R\setminus Y_L|-r_{M_u}(R)+r_{M_u}(Y_L).
\]
Therefore
\[
  \defect_{C_u}(R\setminus Y_L)=|Y_L|-r_{M_u}(Y_L).
\]
By the inductive hypothesis, \((E_u,P_u,C_u)\) has the defect cover property.
This gives \(F_u\subseteq E_u\) and a cover \(D_u\) of \(A(F_u)-Y_L\) satisfying \(|D_u|\le q(F_u)-|Y_L|+r_{M_u}(Y_L)\).
Consequently,
\[
  D:=(L\setminus A_L)\cup Y_L\cup D_u \qquad \text{covers \(A(F_u\cup\{e\})\)}
\]
and
\[
  |D|\le q(F_u)+|L\setminus A_L|+r_{M_u}(Y_L)
  =q(F_u)+r_N(L)
  <q(F_u)+q_e.
\]
Therefore
\[
  \nu\bigl(A(F_u\cup\{e\})\bigr)\le |D|
  <q(F_u\cup\{e\}),
\]
which contradicts the [Hall] condition.
Thus [Hall] forces \(r_N(L)\ge q_e\), so the local matching problem has enough rank to supply the required \(q_e\) representatives at \(e\).

Set \(k:=r_N(L)-q_e\ge 0\).
Let \(O_S\) be the rank zero matroid on \(S\), and let \(F_{L_0}\) be the free matroid on \(L_0\).
We now use \(N\) and the surplus \(k\) to define a candidate reservation matroid on \(L\).
\begin{equation}
\label{eq:branch-reservation}
  M_e:=\bigl((N^*|L_0)\oplus O_S\bigr)\vee\operatorname{Tr}_k\bigl((N|S)\oplus F_{L_0}\bigr), \qquad
  C_e:=M_e^*
\end{equation}
We claim that \(M_e\) is the reservation matroid for the subsystem indexed by \(\{e\}\cup E_u\).

Fix \(X\subseteq L\), and write
\[
  X_S:=X\cap S
  =X\cap R,
  \qquad X_0:=X\cap L_0.
\]
A \(q\)-SDR for \(\{e\}\cup E_u\) avoiding \(X\) is equivalently the union of \(q_e\) representatives from \(A_e\) and a \(q\)-SDR for \(E_u\), where the two selections are pairwise vertex disjoint and neither consumes a resource of \(X\).
After splitting the loops on \(S\), the \(q_e\) representatives from \(A_e\) correspond to a matching \(J\) of size \(q_e\) in \(K_e\).
They avoid \(X\) exactly when the left endpoints of \(J\) lie in \(L\setminus X\).
Let \(Y\subseteq R\) be the set of right endpoints of \(J\).
The \(q\)-SDR for \(E_u\) must then avoid \(Y\), whose resources are consumed by the representatives from \(A_e\), and \(X_S=X\cap R\).
Among the resources of \(X\), only those in \(X_S\) can be consumed by the subsystem indexed by \(E_u\): if \(x\in X\) can be consumed below \(u\), then \(t\in V(Q_x)\), and some vertex of \(T_u\) also belongs to \(V(Q_x)\).
The path between these vertices passes through \(u\), so the subtree \(Q_x\) contains this path and hence \(u\in V(Q_x)\).
Thus \(x\in P_u=R\), and hence \(x\in X\cap R=X_S\).
By the definition of \(M_u\), a \(q\)-SDR for \(\{e\}\cup E_u\) avoiding \(X\) exists if and only if \(K_e\) has a matching of size \(q_e\), with left endpoints in \(L\setminus X\) and right endpoint set \(Y\subseteq R\), such that \(X_S\cup Y\) is independent in \(M_u\).

For such a matching \(J\),
\[
  J\cup\{s_Ls_R:s\in X_S\}
\]
is a matching from \(|X_S|+q_e\) left vertices, including \(X_S\), to an \(M_u\)-independent subset of \(R\).
Conversely, any such matching containing \(X_S\) on the left must use the identity edges at \(X_S\), and deleting them recovers a matching \(J\) as above.
By the definition of \(N=M_u[K_e]\), the existence of \(J\) is therefore equivalent to \(X_S\) being independent in \(N\) and extendable within \(L\setminus X_0\) to an independent set of size \(|X_S|+q_e\).

Since \(X_S\subseteq L\setminus X_0\), whenever \(X_S\) is independent in \(N\), it is independent in \(N|(L\setminus X_0)\).
By the matroid extension property, \(X_S\) extends to a basis of this restriction.
Consequently, it extends to an independent set of size \(|X_S|+q_e\) when
\[
  r_N(L\setminus X_0)\ge |X_S|+q_e.
\]
The dual rank identity \eqref{eq:dual-rank} and \(r_N(L)=q_e+k\) now show that such a matching \(J\) exists when
\begin{equation}
\label{eq:reservation-conditions}
  X_S\text{ is independent in }N,
  \qquad |X|-r_{N^*}(X_0)\le k.
\end{equation}
To verify that these are the exact conditions for independence in \eqref{eq:branch-reservation}, write its two summands as
\[
  M^{(1)}:=(N^*|L_0)\oplus O_S,
  \qquad M^{(2)}:=\operatorname{Tr}_k\bigl((N|S)\oplus F_{L_0}\bigr).
\]
Thus \(M_e=M^{(1)}\vee M^{(2)}\), and \(X\) is independent in \(M_e\) when it can be partitioned as \(X=I_1\sqcup I_2\) with \(I_i\) independent in \(M^{(i)}\) for \(i \in \{1,2\}\).
Since \(O_S\) has rank zero, \(I_1\) contains no element of \(X_S\); hence \(X_S\subseteq I_2\), and the restriction \(N|S\) in \(M^{(2)}\) requires \(X_S\) to be independent in \(N\).
Moreover, \(I_1\) contains at most \(r_{N^*}(X_0)\) elements, whereas the truncation in \(M^{(2)}\) gives \(|I_2|\le k\).
Consequently, \(|X|-r_{N^*}(X_0)\le |I_2|\le k\).
This proves the necessity of \eqref{eq:reservation-conditions}.

Conversely, suppose that \eqref{eq:reservation-conditions} holds.
Choose a basis \(I_1\) of \(N^*|X_0\) and set \(I_2:=X\setminus I_1\).
Then \(I_1\) is independent in \(M^{(1)}\), while the \(S\)-part of \(I_2\) is \(X_S\), its \(L_0\)-part is independent in the free matroid \(F_{L_0}\), and \(|I_2|=|X|-r_{N^*}(X_0)\le k\).
Thus \(I_2\) is independent in \(M^{(2)}\), and hence \(X\) is independent in \(M_e\).
Thus \(M_e\) is the required reservation matroid.

\subsubsection{Pass the Defect Cover Property through \(e\)}
We next prove that \((\{e\}\cup E_u,L,C_e)\) has the defect cover property.

Fix \(Z\subseteq L\), and let \(X:=L\setminus Z\).
The dual rank identity \eqref{eq:dual-rank} gives
\begin{equation}
\label{eq:branch-defect}
  \begin{aligned}
  \defect_{C_e}(Z)&=\bigl(|L|-r_{M_e}(L)\bigr)-\bigl(|Z|-r_{M_e}(L)+r_{M_e}(X)\bigr)\\
  &=|X|-r_{M_e}(X).
  \end{aligned}
\end{equation}
Apply the finite matroid union rank formula \eqref{eq:matroid-union-rank} to the representation of \(M_e\) in \eqref{eq:branch-reservation}, and choose \(W\subseteq X\) attaining the minimum.
Write
\[
  W_0:=W\cap L_0,\qquad
  W_S:=W\cap S.
\]
The ranks of the two summands in \eqref{eq:branch-reservation} on \(W\) are
\[
  \begin{aligned}
  r_{M^{(1)}}(W)&=r_{N^*|L_0}(W_0)+r_{O_S}(W_S)
  =r_{N^*}(W_0),\\
  r_{M^{(2)}}(W)&=\min\bigl\{k,r_{N|S}(W_S)+r_{F_{L_0}}(W_0)\bigr\}
  =\min\{k,r_N(W_S)+|W_0|\}.
  \end{aligned}
\]
Substituting the resulting expression for \(r_{M_e}(X)\) into \eqref{eq:branch-defect} gives
\begin{equation}
\label{eq:branch-union-defect}
  \begin{aligned}
  \defect_{C_e}(Z)&=|X|-\Bigl(|X\setminus W|+r_{N^*}(W_0)+\min\{k,r_N(W_S)+|W_0|\}\Bigr)\\
  &=|W|-r_{N^*}(W_0)-\min\{k,r_N(W_S)+|W_0|\}.
\end{aligned}
\end{equation}

The minimum in \eqref{eq:branch-union-defect} leads to two cases, according to whether the truncation at rank \(k\) is active.

Suppose first that \(r_N(W_S)+|W_0|\le k\).
Since \(W=W_0\sqcup W_S\), \(N|S=M_u|S\), and the dual rank identity \eqref{eq:dual-rank} give
\[
  \begin{aligned}
  \defect_{C_e}(Z)&=|W|-r_{N^*}(W_0)-r_N(W_S)-|W_0|
  =|W_S|-r_N(W_S)-r_{N^*}(W_0)\\
  &\le|W_S|-r_N(W_S)
  =|W_S|-r_{M_u}(W_S)
  =\defect_{C_u}(R\setminus W_S).
  \end{aligned}
\]
The inequality uses \(r_{N^*}(W_0)\ge0\).
Apply the defect cover property for \((E_u,R,C_u)\) at \(R\setminus W_S\).
It gives \(F_u\subseteq E_u\) and a cover \(D\) of \(A(F_u)-W_S\) such that
\[
  |D|\le q(F_u)-\defect_{C_u}(R\setminus W_S)
  \le q(F_u)-\defect_{C_e}(Z).
\]
Since \(W_S\subseteq X = L \setminus Z\), and resources in \(L_0\) do not occur in the subsystem indexed by \(E_u\), the same set covers \(A(F_u)-X\).
Thus, with \(F:=F_u\subseteq\{e\}\cup E_u\), the set \(D\) satisfies both requirements of the defect cover property for \((\{e\}\cup E_u,L,C_e)\) at \(Z\).

Now suppose that \(r_N(W_S)+|W_0|>k\).
Let \(H:=L\setminus W_0\).
Using the dual rank identity \eqref{eq:dual-rank} and \eqref{eq:branch-union-defect} gives
\begin{equation}
\label{eq:truncated-defect}
  \defect_{C_e}(Z)=q_e+|W_S|-r_N(H).
\end{equation}
Choose \(A_H\subseteq H\) attaining \eqref{eq:rado-rank} for \(H\), and let \(Y_H:=N_{K_e}(A_H)\).
Thus
\begin{equation}
\label{eq:rado-witness}
  r_N(H)=|H\setminus A_H|+r_{M_u}(Y_H).
\end{equation}
After deleting \(X\), every remaining left endpoint lies in \(L\setminus X=Z\).
Since \(W_0\subseteq X\), we also have \(Z\subseteq L\setminus W_0=H\).
If it does not belong to \(A_H\), it lies in \(Z\setminus A_H\); if it does, its right endpoint lies in \(Y_H=N_{K_e}(A_H)\), and that endpoint either is deleted with \(X_S\) or lies in \(Y_H\setminus X_S\).
Consequently,
\[
  (Z\setminus A_H)\cup(Y_H\setminus X_S) \qquad \text{covers \(A_e-X\)}.
\]

Let \(Q:=Y_H\cup X_S\).
The defect cover property for \((E_u,R,C_u)\), applied with \(R\setminus Q\), gives \(F_u\subseteq E_u\) and a cover \(D_u\) of \(A(F_u)-Q\) such that
\[
  |D_u|\le q(F_u)-|Q|+r_{M_u}(Q).
\]
Therefore
\[
  D:=(Z\setminus A_H)\cup(Y_H\setminus X_S)\cup D_u \qquad \text{covers \(A(F_u\cup\{e\})-X\)}.
\]
Indeed, the first two terms cover \(A_e-X\), as shown above.
Every hyperedge of \(A(F_u)-X\) that avoids \(Q\) is covered by \(D_u\); any remaining hyperedge meets \(Q\setminus X=Y_H\setminus X_S\), which is contained in \(D\).

Using the bound on \(D_u\), \(|Y_H\setminus X_S|-|Y_H\cup X_S|=-|X_S|\), and \(r_{M_u}(Y_H\cup X_S) \le r_{M_u}(Y_H)+|X_S\setminus Y_H|\), we obtain
\begin{equation}
\label{eq:branch-cover-bound}
\begin{aligned}
  |D|&\le|Z\setminus A_H|+|Y_H\setminus X_S|+|D_u|\\
  &\le q(F_u)+|Z\setminus A_H|-|X_S|+r_{M_u}(Y_H\cup X_S)\\
  &\le q(F_u)+|Z\setminus A_H|+r_{M_u}(Y_H)-|X_S\cap Y_H|.
\end{aligned}
\end{equation}
For each \(s\in W_S\), consider its left copy \(s_L\).
If \(s_L\notin (X\setminus W_0)\setminus A_H\), then the identity edge \(s_Ls_R\) implies that \(s_R\in Y_H\); since \(s\in W_S\subseteq X_S\), this resource is counted in \(X_S\cap Y_H\).
Therefore
\[
  |(X\setminus W_0)\setminus A_H|+|X_S\cap Y_H|\ge |W_S|.
\]
Since \(H\setminus A_H\) is the disjoint union of \(Z\setminus A_H\) and \((X\setminus W_0)\setminus A_H\), \eqref{eq:branch-cover-bound}, \eqref{eq:rado-witness}, and \eqref{eq:truncated-defect} give
\[
  \begin{aligned}
  |D|&\le q(F_u)+|Z\setminus A_H|+r_{M_u}(Y_H)+|(X\setminus W_0)\setminus A_H|-|W_S|\\
  &=q(F_u)+r_N(H)-|W_S|
  =q(F_u)+q_e-\defect_{C_e}(Z)\\
  &=q(F_u\cup\{e\})-\defect_{C_e}(Z).
  \end{aligned}
\]
Since \(D\) covers \(A(F_u\cup\{e\})-X\) and \(X=L\setminus Z\), taking \(F:=F_u\cup\{e\}\) proves the defect cover property for \((\{e\}\cup E_u,L,C_e)\) at \(Z\).

\subsubsection{Combine the Child Subsystems}
Let \(e_i=tu_i\), \(1\le i\le s\), be the child edges of \(t\), set \(C_i:=C_{e_i}\), and define
\[
  C:=C_1\vee\cdots\vee C_s.
\]
Fix \(Z\subseteq P_t\).
By the finite matroid union rank formula \eqref{eq:matroid-union-rank}, choose \(W\subseteq Z\) such that
\[
  r_C(Z)=|Z\setminus W|+\sum_{i=1}^{s}r_{C_i}(W).
\]
For each \(i\), the previous argument shows that \((\{e_i\}\cup E_{u_i},P_t,C_i)\) has the defect cover property.
Applying this property at \(W\) gives \(F_i\subseteq\{e_i\}\cup E_{u_i}\) and a cover \(D_i\) of \(A(F_i)-(P_t\setminus W)\) satisfying
\[
  |D_i|\le q(F_i)-\bigl(r_{C_i}(P_t)-r_{C_i}(W)\bigr).
\]
Let
\[
  F:=\bigcup_{i=1}^{s}F_i,
  \qquad D:=(Z\setminus W)\cup\bigcup_{i=1}^{s}D_i.
\]
Then \(D\) covers \(A(F)-(P_t\setminus Z)\), and
\begin{equation}
\label{eq:combined-cover-estimate}
\begin{aligned}
  |D|&\le q(F)-\sum_{i=1}^{s}\bigl(r_{C_i}(P_t)-r_{C_i}(W)\bigr)+|Z\setminus W|\\
  &=q(F)-\left(\sum_{i=1}^{s}r_{C_i}(P_t)-r_C(Z)\right).
\end{aligned}
\end{equation}
Taking \(Z=P_t\) in \eqref{eq:combined-cover-estimate}, the parenthetical term becomes the rank lost when the child consumption matroids are combined.
Note that the set \(D\) covers \(A(F)\).
If \(r_C(P_t)<\sum_{i=1}^{s}r_{C_i}(P_t)\), then \eqref{eq:combined-cover-estimate} gives
\[
  \nu(A(F))\le |D|
  <q(F),
\]
contradicting the [Hall] condition.
Therefore
\begin{equation}
\label{eq:rank-additivity}
  r_C(P_t)=\sum_{i=1}^{s}r_{C_i}(P_t).
\end{equation}
Thus the child consumption matroids combine without rank loss.
Define
\[
  M_t:=C^*,
  \qquad C_t:=C.
\]
Since \(Z\subseteq P_t\) was arbitrary, equations \eqref{eq:combined-cover-estimate} and \eqref{eq:rank-additivity} give
\[
  |D|\le q(F)-\bigl(r_{C_t}(P_t)-r_{C_t}(Z)\bigr)
  =q(F)-\defect_{C_t}(Z).
\]
Thus \((E_t,P_t,C_t)\) has the defect cover property.
Every basis \(B\) of \(C\) decomposes as
\[
  B=B_1\sqcup \cdots \sqcup B_s, \qquad \text{where each \(B_i\) is a basis of \(C_i\)}.
\]
By Lemma~\ref{lem:semantic}, for each \(i\), there is a \(q\)-SDR for the corresponding subsystem that consumes exactly \(B_i\) among the resources of \(P_t\).
Suppose that two child \(q\)-SDRs consume the same resource \(x\).
If \(x\notin P_t\), then \(Q_x\) meets two distinct child subtrees; since \(Q_x\) is connected, this forces \(t\in V(Q_x)\), a contradiction.
If \(x\in P_t\), the corresponding sets \(B_i\) and \(B_j\), where \(i\ne j\), both contain \(x\), contradicting the disjointness of the \(B_\ell\).
Therefore the child \(q\)-SDRs are pairwise vertex disjoint, and their union is a \(q\)-SDR for \(E_t\).

\subsubsection{Identification of \(M_t\) and completion at the root}
We verify that \(M_t\) represents \(\mathcal I_t\).
If \(X\) is independent in \(M_t\), extend it to a basis \(X'\) of \(M_t\).
Then \(P_t\setminus X'\) is a basis of \(C_t\) disjoint from \(X\).
Decompose it into bases of the \(C_i\) and realize them by \(q\)-SDRs as above.
Their union consumes no resource of \(X\), so \(X\in\mathcal I_t\).

Conversely, suppose that \(X\in\mathcal I_t\), and fix a \(q\)-SDR for \(E_t\) consuming no resource of \(X\).
Its restriction to each child subsystem consumes some set \(Y_i\subseteq P_t\).
Choose an inclusion-minimal subset \(B_i\subseteq Y_i\) consumed by a \(q\)-SDR for that subsystem.
Lemma~\ref{lem:semantic} implies that \(B_i\) is a basis of \(C_i\).
The \(B_i\) are pairwise disjoint, so \(B:=B_1\sqcup \cdots \sqcup B_s\) is a basis of \(C\).
Since \(B\cap X=\emptyset\), the basis \(P_t\setminus B\) of \(M_t\) contains \(X\).
Hence \(X\) is independent in \(M_t\), and therefore \(\mathcal I_t\) is the independent set family of \(M_t\).
This completes the induction.

At the root \(\rho\), the empty set is independent in \(M_\rho\).
Hence, by the definition of \(\mathcal I_\rho\), there is a \(q\)-SDR for \(E_\rho=E(T)\).
This proves the theorem.
\end{proof}

\subsection{Sharpness of Acyclicity}
We now prove the classification stated in Theorem~\ref{thm:forest-restriction}.
Consequently, extensions to unicyclic graphs, cacti, graphs of treewidth at most \(k\) for any fixed \(k\ge2\), or chordal graphs require additional structure, either through stronger hypotheses or through inequalities recording the states around cycles.

\begin{proof}[Proof of Theorem~\ref{thm:forest-restriction}]
Suppose first that \(G\) is a forest.
Every connected resource support lies in one component of \(G\).
For each component containing an edge, restrict to the resources whose supports lie in that component and apply Theorem~\ref{thm:qSDR}.
The resulting \(q\)-SDRs use disjoint resource sets, so their union is a \(q\)-SDR for the system on \(G\).

Conversely, suppose that \(G\) contains a cycle.
Choose consecutive cycle edges \(e_1=uv, \ e_2=vw\) and let \(R\) be the \(u\)--\(w\) path around the remainder of the cycle, so \(v\notin V(R)\).
Introduce resources \(a,b,c,d\) with
\[
  Q_a=\{u\}, \qquad
  Q_b=Q_d
  =\{v\}, \qquad
  Q_c=R,
\]
and let
\[
  E(A_{e_1})=\bigl\{\{a,b\},\{c,d\}\bigr\}, \qquad
  E(A_{e_2})=\bigl\{\{b,c\}\bigr\}.
\]

For each \(z=xy\in E(G)\setminus\{e_1,e_2\}\), introduce a distinct resource \(p_z\) and set \(Q_{p_z}=z\).
Choose \(A_z\) to contain only the loops prescribed by condition~(i) of Definition~\ref{def:tree-system}; explicitly,
\[
  E(A_z)=
  \begin{cases}
    \bigl\{\{p_z\},\{c\}\bigr\}, & x,y\in V(R),\\
    \bigl\{\{p_z\}\bigr\},       & \text{otherwise}.
  \end{cases}
\]
Set \(q(e)=1\) for every \(e\in E(G)\).

Fix \(F\subseteq E(G)\).
For every \(z\in F\setminus\{e_1,e_2\}\), select the loop at \(p_z\).
If \(F\cap\{e_1,e_2\}=\{e_1\}\), add \(\{a,b\}\), and if \(F\cap\{e_1,e_2\}=\{e_2\}\), add \(\{b,c\}\).
If \(F\) contains both \(e_1\) and \(e_2\), add \(\{a,b\}\) and \(\{c,d\}\).
The selected hyperedges are pairwise vertex disjoint, and hence
\[
  \nu(A(F))\ge |F|
  =q(F) \qquad \text{for all } F\subseteq E(G).
\]

The sole representative \(\{b,c\}\) of \(A_{e_2}\) meets both representatives \(\{a,b\}\) and \(\{c,d\}\) of \(A_{e_1}\), hence there is no \(q\)-SDR.
Therefore the [Hall] condition is not sufficient for any graph containing a cycle.
\end{proof}

\section{\texorpdfstring{Hamilton-connectedness of \(5\)-tough chordal graphs}{Hamilton-connectedness of 5-tough chordal graphs}}
\label{sec:five-toughness}

A graph is \emph{chordal} if every cycle of length at least four has a chord.
Chv\'atal introduced \emph{toughness} in 1973 in connection with Hamilton cycles~\cite{Chvatal}.
For a noncomplete graph \(G\), define
\[
  t(G):=\min\left\{\frac{|S|}{c(G-S)}:S\subseteq V(G),\ c(G-S)\ge2\right\},
\]
where \(c(H)\) denotes the number of components of \(H\).
A set \(S\subseteq V(G)\) is \emph{separating} if \(c(G-S)\ge2\).
For complete graphs, set \(t(K_n):=\infty\).
A graph \(G\) is \emph{\(t\)-tough} if
\[
  |S|\ge t\,c(G-S) \qquad \text{whenever } S\subseteq V(G)\text{ and }c(G-S)\ge2.
\]
We note that every Hamiltonian graph is \(1\)-tough, since deleting \(S\) from a Hamilton cycle leaves at most \(|S|\) path components.

Our proof of Theorem~\ref{thm:five-tough-hamilton-connected} builds on the framework of Kabela and Kaiser~\cite{KabelaKaiser}, who reduce Hamiltonicity of chordal graphs to the existence of an SDR.
We use the following reformulations of two lemmas\footnote{The first follows from \cite[Lemma~9]{KabelaKaiser} by taking \(k=2/5\) and multiplying by \(5\); the second is \cite[Lemma~10]{KabelaKaiser}, with red edges renamed gray and \(e\)-enclosing pairs expressed in terms of the supports \(Q_x\).} in our notation.

\begin{lemma}
\label{lem:kk-tree-estimate}
Let \(T\) be a tree and \(B\subseteq E(T)\).
For \(i\in\{0,1,2\}\), let \(b_i\) be the number of edges of \(B\) having exactly \(i\) endpoints of degree at most two in \(T\).
If \(r\) is the number of components of \(T-B\) containing a vertex of degree at most two in \(T\), then
\[
  5(r-1)\ge2b_0+3b_1+5b_2.
\]
\end{lemma}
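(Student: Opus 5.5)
This is a restatement of \cite[Lemma~9]{KabelaKaiser} in our notation, so the plan is to derive it by specialization rather than from scratch. That lemma bounds, for a tree \(T\) and an edge set \(B\), the weighted count of edges of \(B\), classified by how many endpoints have degree at most two, in terms of the number of components of \(T-B\) that meet a low-degree vertex. The weights depend on a parameter \(k\). I will first match the definitions, namely the classes \(b_0,b_1,b_2\) and the count \(r\), with theirs. Then I substitute \(k=2/5\). This should give the inequality \(r-1\ge \tfrac25 b_0+\tfrac35 b_1+b_2\), and multiplying by \(5\) yields \(5(r-1)\ge 2b_0+3b_1+5b_2\).

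In case the specialization needs independent support, I would argue directly as follows. Call a vertex \emph{small} if its degree in \(T\) is at most two, and call a component of \(T-B\) \emph{good} if it contains a small vertex. Contract each component of \(T-B\) to a point to obtain a tree \(T'\) with edge set \(B\) and \(|B|+1\) nodes.

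A bad component with \(m\) vertices has all degrees at least \(3\) in \(T\). Hence at least \(3m-2(m-1)=m+2\ge3\) edges of \(B\) leave it, so bad nodes have degree at least \(3\) in \(T'\). In particular every leaf of \(T'\) is good. The standard leaf count \(\#\text{leaves}\ge 2+\sum_{\deg\ge3}(\deg-2)\) in \(T'\) then bounds the total excess degree of bad nodes by the number of good nodes.

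Next I would rewrite the left side as \(2|B|+b_1+3b_2\) and charge it to endpoints: each endpoint contributes \(1\) if it is a large vertex and \(5/2\) if it is small, which gives an upper bound. A small vertex carries at most two \(B\)-edges. So the charge of a good component can be bounded by \(\deg_{T'}\) plus at most \(3\) per small vertex incident to \(B\). Summing the charges and using \(\sum\deg_{T'}=2|B|\) together with the leaf count should close the inequality with the constant \(5(r-1)\).

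The main obstacle in this fallback is good components with several small vertices and high \(T'\)-degree. For these I would handle the excess by induction on \(|B|\), removing a leaf edge of \(T'\) at each step. I would check that deleting such an edge decreases \(r\) by at most one while decreasing the right-hand side by at most \(5\). I would also check the base case \(B=\emptyset\), where both sides are \(0\) (given \(r\ge1\)).
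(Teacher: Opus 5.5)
Your primary route is exactly the paper's: the paper gives no independent proof of this lemma, only a footnote deriving it from \cite[Lemma~9]{KabelaKaiser} by taking $k=2/5$ and multiplying by $5$. Your fallback, however, would not close as written (for $T=K_{1,3}$ and $B=E(T)$ the $5/2$-per-small-endpoint charge is already $10.5>10=5(r-1)$, and removing any leaf edge of $T'$ from $B$ leaves $r$ unchanged because every leaf of $T'$ hangs off the bad centre, whereas the induction step needs $r$ to drop by one); a short direct proof instead uses $r+h=|B|+1$, where $h$ counts bad components, to rewrite the claim as $3b_0+2b_1\ge5h$, and if $\beta$ and $\gamma$ count the edges of $B$ joining two bad components and joining a bad to a good component, then for $h\ge1$ your degree observation gives $2\beta+\gamma\ge3h$, the tree structure of $T'$ gives $\beta\le h-1$, and since every edge at a bad component has a large endpoint there, $3b_0+2b_1\ge3\beta+2\gamma=2(2\beta+\gamma)-\beta\ge5h+1$ (the case $h=0$ being trivial).
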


\begin{lemma}
\label{lem:kk-separation}
Use the notation of Subsection~\ref{subsec:chordal-system}.
Let \(e\in E(T)\), let \(C\subseteq U\) be a vertex cover of \(A_e\), and suppose that \(Q_x\) and \(Q_y\) contain vertices in different components of \(T-e\).
If \(e\) is black and \(x,y\notin C\), then \(x\) and \(y\) lie in different components of \(G-C\).
If \(e\) is gray and \(x,y\notin C\cup\{i_e\}\), then \(x\) and \(y\) lie in different components of \(G-(C\cup\{i_e\})\).
\end{lemma}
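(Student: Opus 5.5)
The plan is to prove this directly from the clique-tree structure set up in Subsection~\ref{subsec:chordal-system}, rather than only transcribing \cite[Lemma~10]{KabelaKaiser}. I take that structure to be the following. \(T\) is a clique tree of the chordal graph \(G\), and the resources \(U\) are the vertices of \(G\). Each bag \(P_t\) is a maximal clique, and \(Q_x\) is the subtree of bags containing \(x\). For \(e=tu\), the loops of \(A_e\) sit on the separator \(S_e:=P_t\cap P_u\), as Definition~\ref{def:tree-system}(i) requires. For gray edges the special vertex \(i_e\) is treated separately: I expect it to be the one vertex of \(S_e\) whose role is played outside \(A_e\), or the one nonloop obstruction. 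Before anything else I will check this reading against the construction, since the whole argument rests on it.

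First I will establish the standard separation fact. Let \(T_1,T_2\) be the components of \(T-e\), and let \(V_j\) be the set of vertices of \(G\) whose support meets \(T_j\). Every edge \(ab\) of \(G\) lies in some bag, so \(Q_a\cap Q_b\neq\emptyset\). If \(Q_a\) lies entirely in \(T_1\) and \(Q_b\) entirely in \(T_2\), this is impossible. Moreover, a support meeting both \(T_1\) and \(T_2\) is connected, so it contains \(t\) and \(u\) and hence lies in \(S_e\). It follows that every \(x\)--\(y\) path in \(G\) with \(Q_x\) meeting \(T_1\) and \(Q_y\) meeting \(T_2\) contains a vertex of \(S_e\), or else has \(x\) or \(y\) in \(S_e\) already. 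Thus \(S_e\) separates the vertices seen only on one side from those seen only on the other.

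Next comes the black case. Every \(s\in S_e\) carries a loop \(\{s\}\) in \(A_e\), and \(C\) is a vertex cover of \(A_e\), so \(S_e\subseteq C\). Consequently \(x,y\notin C\) forces \(x,y\notin S_e\). Combined with the hypothesis on \(Q_x,Q_y\), this puts \(Q_x\) strictly on one side and \(Q_y\) strictly on the other. Any \(x\)--\(y\) path then meets \(S_e\subseteq C\), so \(x\) and \(y\) lie in different components of \(G-C\). The gray case is identical once I show \(S_e\subseteq C\cup\{i_e\}\), i.e.\ that \(i_e\) is the only separator vertex not forced into a cover. If instead the gray edges carry a nonloop hyperedge \(\{x',y'\}\), I expect the needed claim to be different. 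There I would show that every edge of \(G\) crossing from \(P_t\setminus P_u\) to \(P_u\setminus P_t\) is recorded as a hyperedge of \(A_e\), so a cover blocks it, and only the excess edges through \(i_e\) remain unblocked; deleting \(i_e\) kills those.

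The main obstacle is exactly this last identification: checking from the definitions in Subsection~\ref{subsec:chordal-system} which separator vertices and which crossing edges are hyperedges of \(A_e\) for gray edges, and that \(i_e\) accounts for all the rest. If this bookkeeping becomes delicate, the fallback is to note that the statement is \cite[Lemma~10]{KabelaKaiser} verbatim under the dictionary in the footnote. Under that dictionary, red edges become gray, and an \(e\)-enclosing pair is a pair \(x,y\) whose supports meet different components of \(T-e\), exactly as in the hypothesis on \(Q_x\) and \(Q_y\). With that translation the lemma is cited directly.
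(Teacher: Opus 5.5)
Citing \cite[Lemma~10]{KabelaKaiser} through the footnote's dictionary, which is your fallback, is all the paper itself does for this lemma. The direct argument you lead with, however, has a genuine gap, and it comes from the misreading you flagged.

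\textbf{The gap.} In Subsection~\ref{subsec:chordal-system} the setup differs from your reading in three ways:
\begin{itemize}
\item \(U=V(G)\setminus I\), not \(V(G)\);
\item \(T\) is not a clique tree; it is obtained from \(T_0\) by suppressing degree-two vertices;
\item \(Q_x\) is only the trace of \(F_x\) on \(V(T)\).
\end{itemize}
So an edge \(ab\) of \(G\) can have \(F_a\cap F_b\) consisting only of suppressed vertices. Then \(Q_a\cap Q_b=\emptyset\), and \(S_e=P_t\cap P_u\) need not separate anything.

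Here is a concrete case from Figure~\ref{fig:chordal-example}. Let \(e\) be the black edge of \(T\) joining the leaf labelled \(2,6\) to the vertex labelled \(1,7,8,13\); the vertex labelled \(6,7\) between them is suppressed. Then \(S_e=\emptyset\). Yet the path \(2,6,7\) of \(G\) runs from a vertex supported at the leaf to one supported on the other side. So your step ``any \(x\)--\(y\) path meets \(S_e\)'' is false. What actually blocks this path is that \(\{6,7\}\) is a nonloop hyperedge of \(A_e\), so every cover contains \(6\) or \(7\). Your black case never uses nonloop hyperedges; it derives separation from \(S_e\subseteq C\) alone.

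You also never account for \(I\). Its vertices lie outside \(U\), so they are never in \(C\), and a priori they could bridge the two sides.

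\textbf{The repair.} In both cases, let \(G'\) be \(G-C\) or \(G-(C\cup\{i_e\})\).

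First, every \(a\in V(G')\) has \(Q_a\) inside one component of \(T-e\).
\begin{itemize}
\item For \(a\in U\): otherwise \(\{a\}\) is a loop of \(A_e\), so \(a\in C\).
\item For \(a\in I\): the endpoints of \(F_a\) are never suppressed, and by independence of \(I\) its interior vertices are. So \(F_a\) is either a single vertex of \(T\) or exactly the \(T_0\)-path of one gray edge. Hence \(Q_a=e\) forces \(a=i_e\).
\end{itemize}

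Second, suppose \(ab\in E(G')\) with \(Q_a\) and \(Q_b\) on opposite sides, and pick \(w\in F_a\cap F_b\).
\begin{itemize}
\item \(w\) is suppressed, hence interior to the \(T_0\)-path of some \(f\in E(T)\).
\item Every represented subtree meets \(V(T)\) \cite[Proposition~3]{KabelaKaiser}, and interior vertices of that path have degree two. So \(F_a\) and \(F_b\) each contain an end of \(f\), and these ends lie in different components of \(T-e\).
\item Hence \(f=e\), and after naming the ends suitably, \(t\in F_a\) and \(u\in F_b\).
\item By the description of the \(F_i\) above, no element of \(I\) other than \(i_e\) meets the interior of the \(T_0\)-path of \(e\), and none does if \(e\) is black. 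So \(a,b\in U\), with \(a\in P_t\) and \(b\in P_u\).
\item Therefore \(\{a,b\}\in E(A_e)\), so it must meet \(C\), a contradiction.
\end{itemize}
Thus no edge of \(G'\) joins the two sides, and \(x\) and \(y\) are separated.

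Your gray-case remark that crossing edges are recorded in \(A_e\) is exactly this idea. But it is needed in the black case as well, and it rests on the suppression facts above rather than on a clique-tree separator.
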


We associate a tree-structured hypergraph system with a subtree representation of \(G\).
A \(q\)-SDR for this system can be read along a spanning walk of \(T\) to produce a Hamilton path (Lemma~\ref{lem:endpoint-construction}).
If the Hall condition fails, a violated inequality yields a small vertex cover, which Lemma~\ref{lem:cover-to-separator} converts into a separator contradicting \(5\)-toughness.

\subsection{The Hypergraph System Built from a Chordal Graph}
\label{subsec:chordal-system}
Let \(G\) be a chordal graph.
By Gavril~\cite{Gavril}, there exist a tree \(T_0\) and a family \(\mathcal F=(F_x)_{x\in V(G)}\) of its subtrees such that, for distinct \(x,y\in V(G)\),
\[
  xy\in E(G)\quad\Longleftrightarrow\quad V(F_x)\cap V(F_y)\ne\emptyset.
\]

Choose this representation so that \(|V(T_0)|\) is minimum.
Next choose an inclusion-maximal independent set \(I\) such that, for every \(i\in I\),
\begin{enumerate}
  \item[(i)] \(F_i\) is a path whose vertices have degree at most two in \(T_0\);
  \item[(ii)] \(F_i\) does not properly contain any represented subtree.
\end{enumerate}
Such a choice is possible: minimality of \(T_0\) implies that every leaf \(\ell\) occurs as some \(F_x=\{\ell\}\), for otherwise deleting \(\ell\) would preserve the representation.
Such an \(F_x\) satisfies (i) and (ii).

Suppress every degree-two vertex of \(T_0\) that is not an endpoint of any \(F_i\), \(i\in I\), and denote the resulting tree by \(T\).

Every represented subtree contains a vertex of \(T\)~\cite[Proposition~3]{KabelaKaiser}.
For every \(x\in V(G)\), define
\[
  Q_x:=T[V(F_x)\cap V(T)].
\]
Then \(Q_x\) is a nonempty subtree of \(T\).

Since \(I\) is independent, the subtrees \(F_i\), \(i\in I\), are pairwise vertex disjoint.
By (i), every interior vertex of \(F_i\) is therefore suppressed, so \(Q_i\) is either a single vertex or a single edge of \(T\).
Color an edge \(e\) of \(T\) gray if \(e=Q_i\) for some \(i\in I\), and black otherwise.
Such an \(i\) is unique; write \(i_e:=i\).
Both endpoints of every gray edge have degree two in \(T\).
Indeed, they have degree at most two by (i), while a leaf endpoint would yield a represented singleton properly contained in \(F_{i_e}\), contradicting (ii).

Set \(U:=V(G)\setminus I\), and define
\[
  P_t:=\{x\in U:t\in V(Q_x)\} \qquad \text{for all } t\in V(T).
\]
For \(e=tu\in E(T)\), define a hypergraph \(A_e\) on \(U\).
Its loops are the singleton hyperedges \(\{x\}\) with \(x\in P_t\cap P_u\), and its nonloop hyperedges are the pairs \(\{x,y\}\) such that \(xy\in E(G)\) and, after possibly exchanging \(x,y\), we have \(x\in P_t\) and \(y\in P_u\).
Thus \((A_e)_{e\in E(T)}\) is a tree-structured hypergraph system in the sense of Definition~\ref{def:tree-system}.

Define the multiplicities by
\begin{equation}
\label{eq:chordal-multiplicity}
  q_e:=
  \begin{cases}
    2,&e\text{ is black},\\
    1,&e\text{ is gray}.
  \end{cases}
\end{equation}

\begin{figure}[ht]
\centering
\begin{tikzpicture}[
  gvertex/.style={circle,fill=black,minimum size=1.7mm,inner sep=0pt},
  ivertex/.style={circle,fill=black!35,minimum size=1.7mm,inner sep=0pt},
  hvertex/.style={
    circle,draw=black!70,fill=white,minimum size=4.4mm,inner sep=0pt
  },
  treevertex/.style={
    circle,fill=black!80,minimum size=1.9mm,inner sep=0pt
  },
  suppressed/.style={
    circle,draw=black!45,dashed,fill=white,minimum size=2.7mm,inner sep=0pt
  },
  every node/.style={font=\small}
]

  \node[anchor=east,font=\bfseries] at (-5.85,2.6) {(a)};
  \node[anchor=west,font=\itshape] at (-5.65,2.6)
    {The chordal graph \(G\)};

  \coordinate (g1)  at (-4.85,1.67);
  \coordinate (g2)  at (-5.65,.08);
  \coordinate (g3)  at (-2.45,2.03);
  \coordinate (g4)  at (-1.05,1.95);
  \coordinate (g5)  at (-.68,.62);
  \coordinate (g6)  at (-5.00,.34);
  \coordinate (g7)  at (-4.45,.72);
  \coordinate (g8)  at (-3.75,1.25);
  \coordinate (g9)  at (-2.75,1.30);
  \coordinate (g10) at (-1.45,1.30);
  \coordinate (g11) at (-2.35,.64);
  \coordinate (g12) at (-1.55,.84);
  \coordinate (g13) at (-3.55,.54);

  \foreach \x/\y in {
    1/7,1/8,1/13,
    2/6,
    3/9,3/10,
    4/10,
    5/11,5/12,
    6/7,
    7/8,7/13,
    8/9,8/11,8/13,
    9/11,9/13,
    11/12,11/13,
    12/13}
    \draw[black!55,line width=.42pt] (g\x)--(g\y);

  \foreach \x in {1,2,3,4,5}
    \node[ivertex] at (g\x) {};
  \foreach \x in {6,7,8,9,10,11,12,13}
    \node[gvertex] at (g\x) {};

  \node[font=\scriptsize,text=black!55,anchor=south east] at (g1) {\(1\)};
  \node[font=\scriptsize,text=black!55,anchor=east]       at (g2) {\(2\)};
  \node[font=\scriptsize,text=black!55,anchor=south]      at (g3) {\(3\)};
  \node[font=\scriptsize,text=black!55,anchor=south]      at (g4) {\(4\)};
  \node[font=\scriptsize,text=black!55,anchor=west]       at (g5) {\(5\)};
  \node[font=\scriptsize,anchor=south east] at (g6)  {\(6\)};
  \node[font=\scriptsize,anchor=north east] at (g7)  {\(7\)};
  \node[font=\scriptsize,anchor=south east] at (g8)  {\(8\)};
  \node[font=\scriptsize,anchor=south]      at (g9)  {\(9\)};
  \node[font=\scriptsize,anchor=south]      at (g10) {\(10\)};
  \node[font=\scriptsize,anchor=north]      at (g11) {\(11\)};
  \node[font=\scriptsize,anchor=south]      at (g12) {\(12\)};
  \node[font=\scriptsize,anchor=north]      at (g13) {\(13\)};
  \node[font=\scriptsize] at (-3.35,-.35)
    {\(I=\{1,2,3,4,5\}\), \quad \(U=\{6,\ldots,13\}\)};

  \node[anchor=east,font=\bfseries] at (.05,2.6) {(b)};
  \node[anchor=west,font=\itshape] at (.25,2.6)
    {The subtree \((T_0,\mathcal F)\)};

  \coordinate (tlthree) at (.00,-.05);
  \coordinate (tltwo)   at (.95,.30);
  \coordinate (tlone)   at (1.90,.72);
  \coordinate (tc)      at (3.15,.72);
  \coordinate (tuone)   at (3.45,1.25);
  \coordinate (tutwo)   at (4.00,1.65);
  \coordinate (tuthree) at (4.72,1.92);
  \coordinate (tm)      at (4.55,.32);
  \coordinate (tr)      at (6.00,.07);

  \draw[babyblue,line width=5pt,opacity=.22,line cap=round]
    (tc)--(tm)--(tr);
  \draw[black!75,line width=.85pt]
    (tlthree)--(tltwo)--(tlone)--(tc)--(tuone)
    (tutwo)--(tuthree)
    (tc)--(tm)--(tr);
  \draw[black!38,line width=2.2pt]
    (tuone)--(tutwo);

  \foreach \x in {tlthree,tlone,tc,tuone,tutwo,tuthree,tr}
    \node[treevertex] at (\x) {};
  \foreach \x in {tltwo,tm}
    \node[suppressed] at (\x) {};

  \node[font=\scriptsize,anchor=north] at (.00,-.18) {\(2,6\)};
  \node[font=\scriptsize,anchor=north] at (.95,.14) {\(6,7\)};
  \node[font=\scriptsize,anchor=south east] at (1.78,.84) {\(1,7,8,13\)};
  \node[font=\scriptsize,anchor=north] at (3.03,.57) {\(c:\ 8,9,11,13\)};
  \node[font=\scriptsize,anchor=east] at (3.30,1.25) {\(3,9\)};
  \node[font=\scriptsize,anchor=west] at (4.12,1.55) {\(3,10\)};
  \node[font=\scriptsize,anchor=west] at (4.84,1.92) {\(4,10\)};
  \node[font=\scriptsize,anchor=south west] at (4.65,.46) {\(11,12,13\)};
  \node[font=\scriptsize,anchor=north east] at (6.00,-.07) {\(r:\ 5,11,12\)};

  \node[font=\scriptsize] at (4.95,-.67)
    {\(e=cr\) in \(T\), \quad \(q_e=2\)};

  \node[anchor=east,font=\bfseries] at (-5.85,-1.05) {(c)};
  \node[anchor=west,font=\itshape] at (-5.65,-1.05)
    {The hypergraph \(A_e\)};

  \coordinate (a8)  at (-3.65,-2.50);
  \coordinate (a9)  at (-3.65,-3.20);
  \coordinate (a13) at (-3.65,-3.90);
  \coordinate (a11) at (-.65,-2.86);
  \coordinate (a12) at (2.00,-2.96);
  \coordinate (a6)  at (5.00,-2.66);
  \coordinate (a7)  at (5.00,-3.36);
  \coordinate (a10) at (5.00,-3.90);

  \draw[black!65,thin]
    (a11)--(a8) (a11)--(a9) (a11)--(a13)
    (a11)--(a12) (a13)--(a12);

  \node[hvertex,fill=black!3,text=black] at (a8)  {\(8\)};
  \node[hvertex,fill=black!3,text=black] at (a9)  {\(9\)};
  \node[hvertex,fill=black!3,text=black] at (a13) {\(13\)};
  \node[hvertex,fill=babyblue!10,text=babyblue] (aeleven) at (a11) {\(11\)};
  \node[hvertex,fill=black!8,text=black] at (a12) {\(12\)};
  \node[hvertex,fill=black!5,text=black!55] at (a6)  {\(6\)};
  \node[hvertex,fill=black!5,text=black!55] at (a7)  {\(7\)};
  \node[hvertex,fill=black!5,text=black!55] at (a10) {\(10\)};

  \draw[babyblue,thick]
    (aeleven.45) .. controls +(4.2mm,5.2mm) and +(-4.2mm,5.2mm) ..
    (aeleven.135);

  \node[font=\scriptsize,align=center] at (-3.65,-1.80)
    {\(P_c\setminus P_r\)\\\(\{8,9,13\}\)};
  \node[font=\scriptsize,align=center] at (-.65,-1.80)
    {\(P_c\cap P_r\)\\\(\{11\}\)};
  \node[font=\scriptsize,align=center] at (2.00,-1.80)
    {\(P_r\setminus P_c\)\\\(\{12\}\)};
  \node[font=\scriptsize,align=center,text=black!55] at (5.00,-1.80)
    {outside \(P_c\cup P_r\)\\\(\{6,7,10\}\)};
\end{tikzpicture}
\caption{An example of \(13\) vertices.
In (b), a node \(v\) is labeled by \(\{x:v\in V(F_x)\}\); the hollow degree-two vertices are suppressed.}
\label{fig:chordal-example}
\end{figure}

\subsection{\texorpdfstring{\(q\)-SDRs imply Hamilton paths}{q-SDRs imply Hamilton paths}}

\begin{lemma}
\label{lem:endpoint-construction}
Let \(u,v\) be distinct vertices of \(G\), let
\[
  Z:=\{u,v\}\setminus I.
\]
If the tree-structured hypergraph system \((A_e-Z)_{e\in E(T)}\) admits a \(q\)-SDR, then \(G\) has a Hamilton path from \(u\) to \(v\).
\end{lemma}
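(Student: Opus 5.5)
We follow the Kabela--Kaiser scheme: an Euler-tour (spanning closed walk) of \(T\) that traverses every edge twice, threading a Hamilton path through the vertices of \(G\) in the order in which the walk meets them. Root \(T\) at a vertex \(t_u\in V(Q_u)\) if \(u\in U\), or at an endpoint of \(Q_u\) if \(u\in I\). Consider a depth-first traversal of \(T\) that, where possible, ends near \(Q_v\). Arrange the traversal so that \(v\)'s support is visited last; this amounts to ordering children so that the branch containing \(Q_v\) is explored last along the root-to-\(Q_v\) path.

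The traversal crosses each black edge \(e=tu\) twice, once downward and once upward. A black edge has \(q_e=2\) representatives, and we assign one to each crossing. A representative is either a loop \(\{x\}\) with \(x\in P_t\cap P_u\), or an edge \(xy\in E(G)\) with \(x\in P_t\) and \(y\in P_u\). In both cases it provides a transition from the current bag to the next: a vertex \(x\) placed at the moment of crossing, or the consecutive pair \(x,y\) of the path. We visit the remaining vertices of \(U\) at the tree vertex where the walk first enters their support, or at some vertex of \(Q_x\); they are not consumed by the SDR.

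Consecutive vertices placed within the same bag \(P_t\) are pairwise adjacent, since their supports share \(t\). Transitions between bags go through representatives, which either lie in both bags or form an edge of \(G\). A gray edge \(e\) with \(q_e=1\) gets one representative for one crossing. The vertex \(i_e\), whose support is exactly the edge \(e\) and which is adjacent precisely to \(P_t\cup P_u\), serves as the connector for the other crossing. Because the \(F_i\) are pairwise disjoint and independent, each \(i_e\) is used exactly once. The isolated-vertex case \(Q_i=\{t\}\) is handled by inserting \(i\) at bag \(t\), where it is adjacent to everything in \(P_t\).

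The main steps are as follows. First, define precisely the sequence of vertices produced by the walk, so that each \(x\in U\) appears exactly once. Disjointness of the SDR guarantees that a consumed vertex is not also placed freely, and we place each unconsumed vertex at exactly one tree vertex of its support. Second, check adjacency between consecutive entries. This works because consecutive entries either share a tree vertex in their supports or are joined by a representative edge. The hardest point is the boundary of a crossing: we need the last vertex placed at \(t\) before the crossing to be adjacent to the representative. So we order the vertices within each bag's segment as follows: the entering representative first, then the free vertices of \(P_t\), then the leaving representative. All of these lie in \(P_t\), so any two of them are adjacent.

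Third, handle the endpoints. The condition \(Z=\{u,v\}\setminus I\) removed from every \(A_e\) means that \(u\) and \(v\) are never consumed. So \(u\) can be placed first, at the root bag, and \(v\) last, at the final bag of the walk, which lies in \(Q_v\). The main obstacle is making the walk end inside \(Q_v\) without a spare crossing. The first thing I would try is to let the walk be a closed Euler tour from \(t_u\), cut at the last visit to a vertex of \(Q_v\). The edges traversed after that cut are traversed a second time, with their remaining representatives still available; they must be rerouted so that the material of those subtrees is inserted earlier. This requires a careful reordering argument, with separate cases according to whether \(u\) and \(v\) lie in \(I\), for which a support edge is gray and \(u\) or \(v\) plays the role of \(i_e\).
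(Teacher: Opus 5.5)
There is a genuine gap, and it sits at the step you yourself flag as the main obstacle. The proposal never produces a walk that starts in \(Q_u\), ends in \(Q_v\), and fits the available representatives. You defer this to an unspecified ``careful reordering argument'' and leave the cases \(u\in I\) or \(v\in I\) open. The rest of your scheme matches the paper: one representative per crossing of a black edge, \(i_e\) as the connector on the second crossing of a gray edge, free vertices placed inside a bag between the entering and leaving representatives, \(u\) prepended and \(v\) appended. So this is the one point that needs an idea, and no rerouting is required.

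Take \(s\in V(Q_u)\) and \(t\in V(Q_v)\), and let \(P\) be the \(s\)--\(t\) path in \(T\). Walk from \(s\) along \(P\), making a round trip through every branch off \(P\) before continuing. Equivalently, use your DFS with the child toward \(t\) explored last at every vertex of \(P\), and cut the closed tour at its last visit to \(t\). The discarded tail is then just the return trip along \(P\); it contains no unvisited part of \(T\), so nothing has to be moved earlier.

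This walk visits every vertex of \(T\). It crosses each edge of \(P\) once and every other edge twice, and a singly crossed edge needs only one representative. The leftovers are just more missing vertices: the second representative of a black edge on \(P\), or \(i_e\) for a gray edge \(e\) on \(P\). Insert each at some visit of the walk to a vertex of its support, exactly as you do for free vertices.

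The cases \(u\in I\) or \(v\in I\) are settled by the choice of \(s\) and \(t\), not by case analysis. Gray edges are pairwise vertex-disjoint, so you can choose \(s,t\) such that \(P\) contains every gray edge \(e\) with \(i_e\in\{u,v\}\). Such an edge is crossed once, using its SDR representative, so \(u\) and \(v\) are never needed as connectors. A vertex of \(I\) whose support is a single vertex is never a connector at all.

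This choice really matters. Your rule roots at an unspecified endpoint of the gray edge \(e=Q_u\). Then the tour may cross \(e\) twice and need \(u=i_e\) as a connector in the middle of the path, which conflicts with placing \(u\) first. With these two points supplied, your adjacency and distinctness checks go through as in the paper.
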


\begin{proof}
If \(T\) has no edge, every represented subtree contains the unique vertex of \(T\), so \(G\) is complete and the conclusion is immediate.
Assume \(E(T)\ne\emptyset\).
Choose
\[
  s\in V(F_u)\cap V(T),
  \qquad t\in V(F_v)\cap V(T)
\]
so that the unique \(s\)--\(t\) path in \(T\), denoted by \(P\), contains every gray edge \(e\) for which \(i_e\in\{u,v\}\), if any.
Starting at \(s\), follow \(P\), making a round trip through every branch off \(P\) before continuing.
Let \(W\) be the resulting \(s\)--\(t\) walk in \(T\).
It visits every vertex of \(T\), uses each edge of \(P\) once, and uses every other edge twice.

For each \(e\in E(T)\), let \(M_e\subseteq E(A_e-Z)\) be the set of \(q_e\) hyperedges selected by the \(q\)-SDR.
Assign them to the occurrences of \(e\) in \(W\) as follows.
Use the two members of \(M_e\) on the two occurrences of a black edge outside \(P\), and either member on a black edge in \(P\).
Use the unique member of \(M_e\) on a gray edge in \(P\); if \(e\notin E(P)\) is gray, use this member on one occurrence and \(\{i_e\}\) on the other.

Prepend \(u\), then read \(W\) from \(s\) to \(t\).
For an assigned singleton \(\{x\}\), record \(x\); for an assigned pair \(\{x,y\}\), record \(x,y\) in the direction of the walk.
Finally, append \(v\).

The recorded vertices form a path in \(G\): vertices from the same pair are adjacent by the definition of \(A_e\); if two consecutive edges of \(W\) meet at \(r\in V(T)\), the recording for the first edge ends with some \(x\) such that \(r\in V(Q_x)\), and the recording for the second edge begins with some \(y\) such that \(r\in V(Q_y)\).
Hence \(r\in V(F_x)\cap V(F_y)\), so \(xy\in E(G)\).
The same argument joins \(u\) to the first recorded vertex and the last recorded vertex to \(v\).

These vertices are distinct: the selected hyperedges are vertex disjoint in \(U\), and the added \(i_e\) are distinct elements of \(I\).
The deletion of \(Z\) prevents endpoints in \(U\) from being selected, while the choice of \(P\) prevents endpoints in \(I\) from appearing as \(i_e\).

For each missing \(x\in V(G)\), choose a vertex \(w\in V(Q_x)\).
At a visit of \(W\) to \(w\), insert together all missing vertices for which \(w\) was chosen.
At \(s\), use the position immediately after \(u\); at \(t\), use the position immediately before \(v\).
This preserves the path because the inserted vertices and their two neighbors have represented subtrees containing \(w\), and hence form a clique in \(G\).
\end{proof}

\subsection{Separator Construction}
We convert vertex covers of the \(A_e\) into controlled separators.

\begin{lemma}
\label{lem:cover-to-separator}
For \(C\subseteq U\), set
\[
  F_C:=\{e\in E(T):C\text{ is a vertex cover of }A_e\}.
\]
If \(F_C\ne\emptyset\), then there is a separating set \(S\subseteq V(G)\) such that
\[
  5c(G-S)-|S|\ge5+q(F_C)-|C|.
\]
\end{lemma}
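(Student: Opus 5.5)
We take \(S:=C\cup\{i_e:e\in F_C\text{ gray}\}\) and count the components of \(G-S\) using Lemma~\ref{lem:kk-separation} and the tree estimate Lemma~\ref{lem:kk-tree-estimate}, with \(B:=F_C\). Let \(g\) be the number of gray edges in \(B\), so \(|S|=|C|+g\) (the \(i_e\) are distinct elements of \(I\), disjoint from \(C\subseteq U\)). Also \(q(F_C)=2(|B|-g)+g=2|B|-g\). Since both endpoints of a gray edge have degree two in \(T\), gray edges are counted in \(b_2\). The target inequality becomes
\[
  5c(G-S)\ge 5+2|B|-g+|C|+g-|C|=5+2|B|,
\]
so it suffices to show \(c(G-S)\ge 1+\tfrac{2}{5}|B|\), or more precisely \(5c(G-S)\ge 5+2|B|\).

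\emph{Lower bound on components.} Consider the components of \(T-B\). For each component \(K\) of \(T-B\) containing a vertex \(w\) of degree at most two in \(T\), I want to find a vertex of \(G-S\) "living" in \(K\). At a leaf \(w\) of \(T\), minimality of \(T_0\) gives some \(F_x=\{w\}\); suppression preserves such singletons. More generally, at a degree-\(\le2\) vertex \(w\), I would use the maximality of \(I\) (and the conditions (i),(ii)) to produce a vertex \(x\) with \(Q_x\) inside \(K\), either an element of \(I\) not among the removed \(i_e\), or an element of \(U\setminus C\). This is the step I expect to be the main obstacle: one must make sure the chosen vertex is not in \(S\) and that \(Q_x\) does not cross an edge of \(B\). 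For \(x\in I\), \(Q_x\) is a vertex or a gray edge, and if it is a gray edge of \(B\) then \(x=i_e\in S\); so I would choose \(x\) with \(Q_x\) a single vertex or a non-\(B\) edge, following the construction in \cite{KabelaKaiser}. Given such representatives \(x_K,x_{K'}\) for distinct components \(K,K'\), the path in \(T\) between them crosses some \(e\in B\). Lemma~\ref{lem:kk-separation} is stated for one edge, and the covers for different edges of \(B\) combine into the common set \(S\), which contains \(C\) and every needed \(i_e\). Applying the lemma to that crossing edge \(e\) then shows that \(x_K\) and \(x_{K'}\) lie in different components of \(G-(C\cup\{i_e\})\supseteq G-S\). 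To see that they remain separated in \(G-S\), note that deleting more vertices only refines components. Hence \(c(G-S)\ge r\), with \(r\) as in Lemma~\ref{lem:kk-tree-estimate}.

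\emph{Counting.} Lemma~\ref{lem:kk-tree-estimate} gives \(5(r-1)\ge 2b_0+3b_1+5b_2\ge 2|B|\), hence \(5c(G-S)\ge 5r\ge 5+2|B|\), as required. The extra slack from \(b_1,b_2\) is not needed here. Finally, \(S\) is separating: \(B\ne\emptyset\), and the edge \(e\in B\) splits \(T\) into two sides, each of which contains a vertex of degree at most two (a leaf), so \(r\ge2\). Therefore \(c(G-S)\ge2\).
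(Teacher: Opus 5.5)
Your proof has a genuine gap in the component count, and your choice of $S$ in fact fails. The claim that every component of $T-F_C$ containing a vertex of degree at most two has a representative in $G-S$ breaks down at a vertex $t$ that is an endpoint of both a gray edge $e\in F_C$ and a black edge $f\in F_C$. In that case:
\begin{itemize}
\item $\{t\}$ is an entire component of $T-F_C$.
\item Since the subtrees $F_i$, $i\in I$, are pairwise disjoint, the only element of $I$ whose support contains $t$ is $i_e$, and you have put it into $S$.
\item Any $x\in U\setminus C$ with $t\in V(Q_x)$ must have $Q_x=\{t\}$, because crossing $e$ or $f$ would create a loop $\{x\}$ that $C$ has to cover. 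Such an $x$ need not exist.
\end{itemize}
Concretely, let $G$ be the path $i_a\,x\,i_e\,y\,i_b$, represented on $T_0=T$ (the path $a\,t_1\,t_2\,b$) by $F_{i_a}=\{a\}$, $F_x=\{a,t_1\}$, $F_{i_e}=\{t_1,t_2\}$, $F_y=\{t_2,b\}$, $F_{i_b}=\{b\}$. This is a valid output of the construction, with $I=\{i_a,i_e,i_b\}$ (here $x,y$ violate (ii)) and $t_1t_2$ gray. For $C=\{x,y\}$, all three edges of $T$ lie in $F_C$ (indeed $A_{t_1t_2}$ is empty). So $q(F_C)=5$, and the lemma asks for $5c(G-S)-|S|\ge 8$. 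Your $r$ equals $4$, but $G-\{x,y,i_e\}$ has only the two components $\{i_a\}$ and $\{i_b\}$. This gives $5\cdot 2-3=7<8$.

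The missing idea is that gray edges of $F_C$ adjacent to black edges of $F_C$ must not be cut. The paper proceeds as follows:
\begin{itemize}
\item $B$ consists only of the black edges of $F_C$, $D$ is the set of gray edges of $F_C$ sharing no vertex with an edge of $B$, and $S=C\cup\{i_e:e\in D\}$.
\item A gray edge of $F_C\setminus D$ stays inside a region, and $i_e\notin S$ represents that region. Its $q_e=1$ is charged to a degree-two endpoint of an edge of $B$. These endpoints are distinct, so $q(F_C)\le 2|B|+\lambda+|D|$, while Lemma~\ref{lem:kk-tree-estimate} gives $5(r-1)\ge 2b_0+3b_1+5b_2\ge 2|B|+\lambda$. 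Thus the slack from $b_1,b_2$ that you discard is exactly what pays for these gray edges.
\item Each edge of $D$ is cut, costing one vertex of $S$ but creating one extra region. That region's representative is an element of $U\setminus C$ taken from a hyperedge of the uncovered neighbouring black edge $f\notin F_C$. Your version would need this argument too, whenever a component's only vertices of degree at most two are endpoints of gray edges of $F_C$.
\end{itemize}
This yields $c(G-S)\ge r+|D|$ and hence the lemma. In the example above, $S=C$ leaves the three components $\{i_a\},\{i_e\},\{i_b\}$, and $15-2=13\ge 8$.
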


\begin{proof}
Let
\[
\begin{aligned}
  B&:=\{e\in F_C:e\text{ is black}\},\\
  D&:=\{e\in F_C:e\text{ is gray and shares no vertex with an edge of }B\}.
\end{aligned}
\]
Let \(\lambda\) be the total number of endpoints of edges in \(B\) having degree at most two in \(T\).

For each gray edge \(e\in F_C\setminus D\), choose an endpoint that \(e\) shares with an edge of \(B\).
These endpoints are distinct and counted by \(\lambda\), since gray edges are vertex disjoint and have degree-two endpoints.
Set
\[
  S:=C\cup\{i_e:e\in D\}.
\]
Since \(C\subseteq U\) and \(\{i_e:e\in D\}\subseteq I\),
\begin{equation}
\label{eq:separator-counts}
  q(F_C)\le2|B|+\lambda+|D|,
  \qquad |S|=|C|+|D|.
\end{equation}

Let \(r\) be the number of components of \(T-B\) containing a vertex of degree at most two in \(T\).
If \(b_i\) denotes the number of edges of \(B\) having exactly \(i\) such endpoints, then Lemma~\ref{lem:kk-tree-estimate} gives
\begin{equation}
\label{eq:tree-component-bound}
  5(r-1)\ge2b_0+3b_1+5b_2
  \ge2|B|+\lambda.
\end{equation}

Deleting \(D\) from these \(r\) components of \(T-B\) produces \(r+|D|\) regions of \(T-(B\cup D)\).
We show that every region contains a vertex of \(Q_x\) for some \(x\in V(G)\setminus S\).
Fix a region \(R\), and let \(K\) be the component of \(T-B\) containing \(R\).
By the definition of \(r\), \(K\) contains a vertex \(t\) of degree at most two in \(T\).
By construction, \(t\in V(Q_i)\) for some \(i\in I\): a degree-two vertex remains after suppression only if it is an endpoint of some \(F_i\); for a leaf, the conclusion follows from the minimality of \(T_0\) and the maximality of \(I\).
If \(K\) contains no edge of \(D\), then \(t\in R\) and \(i\) does not belong to \(\{i_e:e\in D\}\); hence \(i\notin S\).
If \(K\) contains an edge of \(D\), then \(R\) contains an endpoint \(t\) of some \(e\in D\).
The other edge \(f\) incident with \(t\) is black since gray edges are vertex disjoint.
Since \(e\) shares no vertex with an edge of \(B\), we have \(f\notin F_C\).
Thus \(C\) does not cover \(A_f\), so \(A_f\) has a hyperedge disjoint from \(C\).
By the definition of \(A_f\), this hyperedge contains a resource \(x\) with \(t\in V(Q_x)\).
Since \(x\in U\setminus C\), we have \(x\notin S\).

For each of the \(r+|D|\) regions \(R\), fix a vertex \(x_R\in V(G)\setminus S\) such that \(Q_{x_R}\) meets \(R\).
Note that \(V(G)=U\sqcup I\).
Then, for every \(x\in V(G)\setminus S\), the subtree \(Q_x\) crosses no edge of \(B\cup D\): if \(x\in U\setminus C\), then \(Q_x\) cannot cross an edge \(e\in B\cup D\subseteq F_C\), since the loop \(\{x\}\) of \(A_e\) would have to be covered by \(C\), contradicting \(x\notin C\); if \(x\in I\setminus S\), then \(Q_x\) is a singleton or a gray edge, so it crosses no black edge, and crossing an edge \(e\in D\) would give \(x=i_e\in S\), again a contradiction.
Since \(Q_x\) is connected, it meets at most one region.

Now consider two distinct regions \(R_1,R_2\).
The path in \(T\) between them contains an edge \(e\in B\cup D\).
If \(e\in B\), Lemma~\ref{lem:kk-separation} places \(x_{R_1}\) and \(x_{R_2}\) in different components of \(G-C\), and hence of \(G-S\).
If \(e\in D\), Lemma~\ref{lem:kk-separation} again separates \(x_{R_1}\) and \(x_{R_2}\) in \(G-(C\cup\{i_e\})\), and hence in \(G-S\).
Thus distinct regions give distinct components of \(G-S\), so
\begin{equation}
\label{eq:region-component-count}
  c(G-S)\ge r+|D|.
\end{equation}
By \eqref{eq:region-component-count}, \eqref{eq:separator-counts}, and \eqref{eq:tree-component-bound},
\[
\begin{aligned}
  5c(G-S)-|S|&\ge 5(r+|D|)-(|C|+|D|)
  =5r+4|D|-|C|\\
  &\ge5+2|B|+\lambda+4|D|-|C|\\
  &\ge5+q(F_C)+3|D|-|C|
  \ge5+q(F_C)-|C|.
\end{aligned}
\]
If \(B\ne\emptyset\), \eqref{eq:tree-component-bound} implies \(r\ge2\), so \eqref{eq:region-component-count} gives \(c(G-S)\ge2\).
If \(B=\emptyset\), then \(T-B=T\), so \(r=1\), and \(D=F_C\ne\emptyset\); again \eqref{eq:region-component-count} gives \(c(G-S)\ge2\).
Thus \(S\) is separating.
\end{proof}

\subsection{Proof of Theorem~\ref{thm:five-tough-hamilton-connected} and Further Discussion}
The proof turns a violated Hall inequality into a small vertex cover and then into a separator contradicting \(5\)-toughness.
\begin{proof}[Proof of Theorem~\ref{thm:five-tough-hamilton-connected}]
Fix distinct vertices \(u,v\in V(G)\), and let \(Z:=\{u,v\}\setminus I\).
If \(T\) has no edge, then \(G\) is complete.
Assume otherwise and consider the tree-structured hypergraph system
\[
  \mathcal A_Z:=(A_e-Z)_{e\in E(T)}
\]
on the resource set \(U\setminus Z\) with multiplicity function \(q\).

Suppose that \(\mathcal A_Z\) admits no \(q\)-SDR.
By Theorem~\ref{thm:qSDR}, some nonempty \(F_0\subseteq E(T)\) satisfies
\[
  \nu(A(F_0)-Z)\le q(F_0)-1.
\]
Let \(L\) be the resources carrying loops in \(A(F_0)-Z\).
Every remaining support lies in one component of \(T-F_0\): if \(Q_x\) crossed an edge \(tu\in F_0\), then \(x\in P_t\cap P_u\), so \(\{x\}\) would be a loop and \(x\in L\).
Contract each component of \(T-F_0\) to a vertex.
Deleting \(L\) therefore leaves only pairs in \(H:=A(F_0)-(Z\cup L)\), and \(H\) is bipartite.
K\H{o}nig's theorem and the loops on \(L\) give
\[
  \nu(A(F_0)-Z)=|L|+\nu(H)
  =|L|+\tau(H)
  =\tau(A(F_0)-Z).
\]
Hence \(A(F_0)-Z\) has a vertex cover \(C_0\) with
\begin{equation}
\label{eq:small-cover}
  |C_0|\le q(F_0)-1.
\end{equation}
Set \(C:=C_0\cup Z\).
Then \(C\) covers \(A(F_0)\), so \(F_0\subseteq F_C\), and \eqref{eq:small-cover} gives
\[
  q(F_C)-|C|\ge q(F_0)-|C_0|-|Z|
  \ge1-|Z|.
\]
Lemma~\ref{lem:cover-to-separator} gives a separating set \(S\) with
\[
  5c(G-S)-|S|\ge6-|Z|
  \ge4.
\]
Thus \(|S|<5c(G-S)\), contradicting the \(5\)-toughness of \(G\).

Therefore \(\mathcal A_Z\) admits a \(q\)-SDR.
Lemma~\ref{lem:endpoint-construction} gives a Hamilton path from \(u\) to \(v\).
Since \(u,v\) were arbitrary, \(G\) is Hamilton-connected.
\end{proof}

Chen, Jacobson, K\'{e}zdy and Lehel proved the Hamiltonicity bound \(18\) in 1998~\cite{ChenJacobsonKezdyLehel}, and Kabela and Kaiser lowered it to \(10\) while proving Hamilton-connectedness in 2017~\cite{KabelaKaiser}.
Theorem~\ref{thm:five-tough-hamilton-connected} lowers the sufficient toughness bound to \(5\) while retaining Hamilton-connectedness.
On the other hand, Bauer, Broersma and Veldman constructed nonhamiltonian chordal graphs with toughness arbitrarily close to \(7/4\)~\cite{BauerBroersmaVeldman}.
This motivates the following conjecture.
\footnote{The \(2\)-tough Hamiltonicity bound for chordal graphs was posed as an open question in 2006~\cite{BauerBroersmaSchmeichel}.}

\begin{conjecture}
\label{conj:two-tough}
Every \(2\)-tough chordal graph on at least three vertices is Hamilton-connected.
\end{conjecture}

\section{Other Consequences of the Hall Theorem}
\subsection{The min--max formula}

\begin{corollary}
\label{cor:deficiency}
Let \(\mathcal A=(A_e)_{e\in E(T)}\) be a tree-structured hypergraph system, and let \(q:E(T)\to\mathbb Z_{>0}\) be a multiplicity function.
A \emph{partial \(q\)-SDR} for \(\mathcal A\) chooses, for each \(e\in E(T)\), a set \(M_e\subseteq E(A_e)\) with \(|M_e|\le q_e\), such that all chosen hyperedges are pairwise vertex disjoint.
Let \(\rho(\mathcal A,q)\) be the maximum of \(\sum_{e\in E(T)}|M_e|\) over all partial \(q\)-SDRs.
Then
\[
\begin{aligned}
  \rho(\mathcal A,q)&=\min_{F\subseteq E(T)}\left(q(E(T)\setminus F)+\nu(A(F))\right)\\
  &=q(E(T))-\max_{F\subseteq E(T)}\left(q(F)-\nu(A(F))\right).
\end{aligned}
\]
\end{corollary}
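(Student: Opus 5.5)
The second equality is just \(q(E(T)\setminus F)=q(E(T))-q(F)\), so the content is the first equality. I will prove ``\(\le\)'' directly and deduce ``\(\ge\)'' from Theorem~\ref{thm:qSDR} by the standard deficiency trick: pad the system with dummy resources.

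For \(\le\), fix a partial \(q\)-SDR \((M_e)\) and any \(F\subseteq E(T)\). The hyperedges in \(\bigcup_{e\in F}M_e\) are pairwise disjoint hyperedges of \(A(F)\), so they number at most \(\nu(A(F))\). Meanwhile \(\sum_{e\notin F}|M_e|\le q(E(T)\setminus F)\). Adding these gives \(\sum_e|M_e|\le q(E(T)\setminus F)+\nu(A(F))\).

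For \(\ge\), let \(d:=\max_F\bigl(q(F)-\nu(A(F))\bigr)\ge0\), where \(F=\emptyset\) gives \(0\). Add \(d\) new resources \(w_1,\dots,w_d\), each with support \(Q_{w_j}:=T\). Each \(w_j\) then lies in every bag, so Definition~\ref{def:tree-system}(i) forces the loop \(\{w_j\}\) into every \(A_e\). The new family \(\widetilde{\mathcal A}\) is obtained by adding these loops and keeping all old hyperedges, and it is still a tree-structured hypergraph system, since old hyperedges still meet conditions (i)--(ii) for the enlarged bags.

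For nonempty \(F\), the \(w_j\) appear in \(\widetilde A(F)\) only as isolated loops, so \(\nu(\widetilde A(F))=\nu(A(F))+d\ge q(F)\). Hence [Hall] holds and Theorem~\ref{thm:qSDR} provides a \(q\)-SDR of \(\widetilde{\mathcal A}\). At most \(d\) of its chosen hyperedges are loops \(\{w_j\}\), because these loops are pairwise disjoint and each \(w_j\) is used at most once. Deleting them leaves a partial \(q\)-SDR of \(\mathcal A\) of size at least \(q(E(T))-d\), which is the required lower bound.

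The only step needing care is that the padded family really satisfies Definition~\ref{def:tree-system}. The loops on the \(w_j\) are forced, not optional, and their supports must be subtrees; taking \(Q_{w_j}=T\) handles both requirements. If \(T\) has a single vertex there are no edges and the statement is trivial.
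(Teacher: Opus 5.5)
Your proposal is correct and follows essentially the same route as the paper: the same direct upper bound, and the same padding with $\Delta$ dummy resources of support $T$ whose loops appear in every $A_e$ and in no other hyperedge. As in the paper, you then apply Theorem~\ref{thm:qSDR} and delete the at most $\Delta$ dummy representatives. You also check explicitly that the padded family still satisfies Definition~\ref{def:tree-system}, which the paper only asserts.
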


\begin{proof}
Write \(E:=E(T)\), \(Q:=q(E)\), and
\[
  \Delta:=\max_{F\subseteq E}\left(q(F)-\nu(A(F))\right).
\]
For every \(F\subseteq E\), a partial \(q\)-SDR contains at most \(q(E\setminus F)\) representatives assigned to indices in \(E\setminus F\), while those assigned to indices in \(F\) form a matching in \(A(F)\).
Therefore
\[
  \rho(\mathcal A,q)\le q(E\setminus F)+\nu(A(F))
  =Q-\left(q(F)-\nu(A(F))\right).
\]
Taking the minimum over \(F\) gives \(\rho(\mathcal A,q)\le Q-\Delta\).

Add a set \(W\) of \(\Delta\) new resources, and give every \(w\in W\) the support \(Q_w:=T\).
For every \(e\in E\), obtain \(A_e^+\) from \(A_e\) by adding one loop at every resource of \(W\) and no other hyperedge containing a resource of \(W\).
Write
\[
  \mathcal A^+:=(A_e^+)_{e\in E},
  \qquad E(A^+(F)):=\bigcup_{e\in F}E(A_e^+).
\]
The family \(\mathcal A^+\) is a tree-structured hypergraph system, and for every nonempty \(F\subseteq E\),
\[
  \nu(A^+(F))=\nu(A(F))+\Delta
  \ge q(F).
\]
Theorem~\ref{thm:qSDR} gives a \(q\)-SDR of size \(Q\) for \(\mathcal A^+\).
At most \(\Delta\) of its representatives consume resources in \(W\).
Deleting those representatives leaves a partial \(q\)-SDR for \(\mathcal A\) of size at least \(Q-\Delta\).
Thus \(\rho(\mathcal A,q)\ge Q-\Delta\).
Together with the upper bound, this proves the corollary.
\end{proof}

\subsection{Existence of a Polynomial Time Algorithm}
\begin{corollary}
\label{cor:polynomial-algorithm}
Given a tree-structured hypergraph system \(\mathcal A\) and a multiplicity function \(q\), there is a polynomial time algorithm to decide whether \(\mathcal A\) admits a \(q\)-SDR.
\end{corollary}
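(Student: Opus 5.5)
The plan is to make the inductive proof of Theorem~\ref{thm:qSDR} effective. Checking [Hall] directly is hopeless, since there are exponentially many sets \(F\). Instead, I would compute the reservation matroids \(M_t\) explicitly, bottom-up from the leaves, and at the end test whether \(\emptyset\in\mathcal I_\rho\) (equivalently, whether the construction succeeds at \(\rho\)). By Lemma~\ref{lem:reduction} we may first pass to \(\mathcal A'\), which is a polynomial-time preprocessing step. The proof shows that, whenever a \(q\)-SDR exists, \(M_t\) is built from the free matroid at the leaves by four operations:
\begin{itemize}
  \item the Rado--Perfect induction \(M_u[K_e]\);
  \item duality and restriction;
  \item direct sums with free or rank-zero matroids, together with truncation \(\operatorname{Tr}_k\);
  \item matroid union.
\end{itemize}
The only place the construction can fail is when \(r_N(L)<q_e\) at some child edge, or when rank is lost in the union (\eqref{eq:rank-additivity} fails). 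In either case the proof exhibits a violated Hall inequality, so no \(q\)-SDR exists, and the algorithm may reject.

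The naive implementation uses independence oracles, and this is the step I expect to be the main obstacle. Composing oracles level by level costs a polynomial factor per level of \(T\), which is exponential in the depth of \(T\). Note also that the semantic description of \(\mathcal I_t\) reduces an oracle query back to an instance of the same problem, so it does not help. I would therefore keep every \(M_t\) as an explicit linear representation: a matrix over a field \(\mathbb F\) with columns indexed by \(P_t\). All four operations preserve representability over a sufficiently large field, with standard constructions:
\begin{itemize}
  \item \(M_u[K_e]\): multiply the representation of \(M_u\) (columns \(R\)) by an \(R\times L\) matrix whose entry at \((y,x)\) is a generic element when \(xy\in E(K_e)\) and zero otherwise.
  \item Duality: pass from \([I\mid A]\) to \([-A^{\mathsf T}\mid I]\), after Gaussian elimination.
  \item Truncation to rank \(k\): multiply on the left by a generic \(k\times r\) matrix.
  \item Union: stack the representations of the \(C_i\), each multiplied columnwise by fresh generic scalars.
\end{itemize}
Every matrix has at most \(|U|\) columns, and after eliminating dependent rows at most \(|U|\) rows. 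Hence each step costs polynomially many field operations, and there are \(O(|E(T)|)\) steps.

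To make the choice of "generic" elements effective, I would draw them uniformly from a finite set \(\Sigma\subseteq\mathbb F\), with \(\mathbb F\) a prime field of size polynomial in \(|U|\) and \(|E(T)|\) (or \(\mathbb F=\mathbb Q\)). The property to certify is only that certain minors are nonzero polynomials in the generic variables, of degree bounded by a polynomial in \(|U|\) and \(|E(T)|\). By the Schwartz--Zippel lemma, a union bound over the polynomially many rank computations the algorithm actually performs then shows that all ranks are computed correctly with high probability. The rank checks \(r_N(L)\ge q_e\) and \eqref{eq:rank-additivity} are read off the matrices. Accepting exactly when all checks pass is correct by the proof of Theorem~\ref{thm:qSDR}, because when every check succeeds the recursion reaches the root with \(\emptyset\in\mathcal I_\rho\).

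The delicate point is derandomization, or at least showing that the needed minors are not identically zero. The latter follows from the combinatorial characterizations: the Rado--Perfect rank formula \eqref{eq:rado-perfect-rank} and the union rank formula \eqref{eq:matroid-union-rank} are attained by generic representations. If a deterministic algorithm is required, I would fall back on the matroid-intersection and matroid-partition algorithms applied at each single step. These need only the explicit matrices of the children, not nested oracles, so the per-level blowup disappears. I would further argue that the bit sizes over \(\mathbb Q\) stay polynomial by working with the reduced row-echelon form after each step. Which of these two routes is cleaner is the part I am least sure of. In any case, the sound core of the plan is the bottom-up explicit computation with Hall-violation detection at the two failure points.
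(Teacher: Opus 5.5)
Your bottom-up skeleton is logically sound. If $r_N(L)<q_e$ at some child edge, or if \eqref{eq:rank-additivity} fails at some vertex, then no $q$-SDR exists. If every check passes, the identification of the $M_t$ carries through to the root.

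The gap is in turning this into a polynomial-time algorithm, and the randomized part, as stated, does not work. The ranks computed at a vertex depend on the whole matroid $M_u$ of each child, not only on the few ranks you evaluated at $u$. If you argue level by level, each computed matrix must represent $M_t$ exactly. That requires a union bound over up to $2^{|P_t|}$ bases, hence a sample set of size $2^{\mathrm{poly}}$, not a field of polynomial size. If instead you treat each final check as one polynomial identity in all generic variables, your degree bound fails. With the constructions you list, every dualization (Gaussian elimination or Cramer's rule) can multiply degrees by up to $|U|$, and there are several dualizations per level of $T$. So the degree bound you need is exponential in the depth. For the same reason, bit lengths over $\mathbb Q$ can compound from level to level, and passing to reduced row-echelon form does not prevent this. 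You can repair this part by working over $\mathbb F_p$ with $p$ of polynomially many bits, but the result is only a Monte Carlo algorithm.

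The deterministic fallback does not close the gap. Matroid intersection and partition algorithms take independence oracles for their inputs and return ranks or independent sets. They do not output linear representations of $M_u[K_e]$, of a truncation, or of $C_1\vee\cdots\vee C_s$. To continue to the next level you would again need an oracle for the output, which brings back exactly the nested-oracle blowup you identified. No deterministic polynomial-time construction of representations of such matroids is known; already for transversal matroids this is a well-known derandomization question. So your plan yields at best a randomized algorithm, whereas the paper's proof is deterministic.

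The missing idea is to use Theorem~\ref{thm:qSDR} only as a black box and never compute a matroid. Let $\mathcal P$ be the set of nonnegative vectors dominated by a convex combination of count vectors of labelled matchings in $A(E(T))$. A $q$-SDR exists if and only if $q\in\mathcal P$. One direction is trivial; for the other, every count vector satisfies $p(F)\le\nu(A(F))$, so $q\in\mathcal P$ implies [Hall]. Linear optimization over $\mathcal P$ is a maximum-weight matching problem: zero the negative weights, give each representative $a$ the weight $\max\{w_e:a\in E(A_e)\}$, and turn loops into pendant edges. The optimization--separation equivalence of Gr\"otschel, Lov\'asz and Schrijver then decides $q\in\mathcal P$ deterministically in polynomial time.
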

\begin{proof}
We record how many hyperedges of a matching are used as representatives for each \(A_e\).
For a matching \(M\) in \(A(E(T))\), a labelling is a map \(\ell:M\to E(T)\) such that \(a\in E(A_{\ell(a)})\) for every \(a\in M\).
Its count vector is \(p=(p_e)_{e\in E(T)}\), where \(p_e:=|\ell^{-1}(e)|\).
For \(F\subseteq E(T)\), write \(p(F):=\sum_{e\in F}p_e\).
Let \(\mathcal P\) consist of the nonnegative vectors that are bounded coordinatewise by a convex combination of these count vectors.

We claim that \(\mathcal A\) admits a \(q\)-SDR if and only if \(q\in\mathcal P\).
One direction is immediate.

Conversely, suppose that \(q\in\mathcal P\).
By definition of \(\mathcal P\), there are count vectors \(p^{(1)},\ldots,p^{(k)}\) and nonnegative coefficients \(\lambda_1,\ldots,\lambda_k\), with \(\sum_{i=1}^k\lambda_i=1\), such that \(q\le\sum_{i=1}^k\lambda_i p^{(i)}\) coordinatewise.
For every \(i\) and \(F\subseteq E(T)\), the \(p^{(i)}(F)\) representatives whose labels lie in \(F\) form a matching in \(A(F)\), so \(p^{(i)}(F)\le\nu(A(F))\).
Therefore, for every \(F\subseteq E(T)\),
\[
  q(F)\le\sum_{i=1}^k\lambda_i p^{(i)}(F)\le\nu(A(F)).
\]
Theorem~\ref{thm:qSDR} therefore gives a \(q\)-SDR.

It remains to decide whether \(q\in\mathcal P\) in polynomial time.
Every count vector is integral and lies in \([0,|U|]^{E(T)}\), so \(\mathcal P\) is a well-described bounded rational polytope.
By the optimization--separation equivalence~\cite[Theorem~6.4.9]{GLS}, it suffices to optimize linear functions over \(\mathcal P\) in polynomial time.
For \(w\in\mathbb R^{E(T)}\), first replace every negative \(w_e\) by \(0\), which does not change the optimum. 
Assign each representative \(a\) weight \(\max\{w_e:a\in E(A_e)\}\), and replace every loop by an edge to a new private vertex.
After finding a maximum-weight matching~\cite{EdmondsMatching}, discard its zero-weight edges and label each remaining representative by an index attaining its weight.
This finds a count vector maximizing \(w\cdot x\) over \(\mathcal P\) in polynomial time, completing the proof.
\end{proof}

\begin{remark}
A standard projected subgradient argument~\cite[Section~3.1]{BubeckConvex}, using the maximum-weight matching oracle above, also gives a deterministic strongly polynomial feasibility algorithm.
\end{remark}

\subsection{A polynomial time solvable case of full rainbow matching}

The problem of finding a matching containing exactly one edge of each color is known as the \emph{full rainbow matching} problem~\cite{GaoRamaduraiWanlessWormald}.
This problem is NP-complete even in bipartite multigraphs.
\footnote{Given a \(3\)-dimensional matching \(\mathcal M\subseteq X\times Y\times Z\), where \(X,Y,Z\) have equal size, form a bipartite multigraph with classes \(X,Y\) by replacing each triple \((x,y,z)\in\mathcal M\) with an edge \(xy\) of color \(z\). A matching containing one edge of each color corresponds exactly to a set of \(|Z|\) pairwise disjoint triples~\cite[Problem~SP1, p.~221]{GareyJohnson}.}
The following corollary identifies a polynomial time solvable case of this problem.

\begin{corollary}
\label{cor:prescribed-colors}
Let \(T\) be a finite tree, let \((U_t)_{t\in V(T)}\) be pairwise disjoint finite sets, and, for each edge \(e=tu\), let \(H_e\) be a bipartite graph with classes \(U_t\) and \(U_u\).
Regard \(E(H_e)\) as the color class \(e\).
Let \(q:E(T)\to\mathbb Z_{>0}\).
There is a matching \(M\subseteq\bigcup_{e\in E(T)}E(H_e)\) such that
\[
  |M\cap E(H_e)|=q_e \qquad \text{for every color } e\in E(T)
\]
if and only if
\[
  \nu\left(\bigcup_{e\in F}E(H_e)\right)\ge\sum_{e\in F}q_e \qquad \text{for every \(F\subseteq E(T)\)}.
\]
Moreover, the existence of such a matching can be tested in polynomial time.
\end{corollary}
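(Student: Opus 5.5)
The plan is to realize the colored bipartite graphs as a loopless tree-structured hypergraph system on the resource set \(U:=\bigsqcup_{t\in V(T)}U_t\), and then read off the result from Theorem~\ref{thm:qSDR} and Corollary~\ref{cor:polynomial-algorithm}. For each \(x\in U_t\), take the support to be the single vertex \(Q_x:=\{t\}\); this is a subtree of \(T\). Then \(P_t=U_t\), and the bags are pairwise disjoint, so \(P_t\cap P_u=\emptyset\) for every edge \(e=tu\). For each edge \(e=tu\), let \(A_e\) be the hypergraph on \(U\) whose hyperedges are exactly the pairs \(\{x,y\}\in E(H_e)\), with \(x\in U_t=P_t\) and \(y\in U_u=P_u\).

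I would then check Definition~\ref{def:tree-system}. Condition (i) requires that the resources carrying loops are exactly \(P_t\cap P_u\); since this set is empty and \(A_e\) has no loops, (i) holds. Condition (ii) holds by construction. Thus \(\mathcal A=(A_e)\) is a tree-structured hypergraph system, and Lemma~\ref{lem:reduction} does nothing here.

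Next I would translate the two sides of the statement. A \(q\)-SDR chooses \(q_e\) edges of \(H_e\) for each \(e\), all pairwise vertex disjoint. Since the \(H_e\) are regarded as distinct color classes (parallel edges in different colors stay distinct), the union of these choices is a matching \(M\) with \(|M\cap E(H_e)|=q_e\). Conversely, such an \(M\) partitions by color into a \(q\)-SDR. On the Hall side, \(A(F)\) is precisely the graph \(\bigcup_{e\in F}E(H_e)\), so the two \(\nu\) conditions coincide. Applying [SDR]\(\Leftrightarrow\)[Hall] of Theorem~\ref{thm:qSDR} then gives the equivalence.

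The one subtle point is a single pair \(\{x,y\}\) lying in two color classes. This can only happen if \(x\in U_t\) and \(y\in U_u\) for both classes, and since the \(U_t\) are disjoint, both classes must be the same edge \(tu\). So colors are never ambiguous, and the equivalence holds as stated.

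For the algorithmic claim, the construction above takes polynomial time in the input size. Corollary~\ref{cor:polynomial-algorithm} then decides the existence of a \(q\)-SDR in polynomial time, which decides the existence of the required matching. I do not expect a real obstacle. The only points needing care are confirming that disjoint singleton supports make \(\mathcal A\) a valid system with no loops, and the color-uniqueness remark above.
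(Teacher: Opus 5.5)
Your proposal is correct and follows the paper's proof: singleton supports \(Q_x=\{t\}\) for \(x\in U_t\), so that \(P_t=U_t\) and the system has no loops, with \(A_e\) carrying the edges of \(H_e\); the equivalence then comes from Theorem~\ref{thm:qSDR} and the algorithm from Corollary~\ref{cor:polynomial-algorithm}. Your extra remark that disjointness of the \(U_t\) prevents a pair from lying in two color classes is a small detail the paper leaves implicit.
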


\begin{proof}
Take \(U:=\bigcup_{t\in V(T)}U_t\) as the resource set and give each \(x\in U_t\) the singleton support \(Q_x:=\{t\}\).
For every \(e\in E(T)\), let \(A_e\) be the hypergraph on \(U\) with edge set \(E(H_e)\).
Since the sets \(U_t\) are pairwise disjoint, the corresponding bag \(P_t\) is \(U_t\).
This gives a tree-structured hypergraph system with no loops, and its \(q\)-SDRs are exactly the matchings satisfying the prescribed color requirements.
The characterization follows from Theorem~\ref{thm:qSDR}, and the algorithm follows from Corollary~\ref{cor:polynomial-algorithm}.
\end{proof}

The corollary gives a multiplicity version of the full rainbow matching problem.
Taking \(q_e=1\) for every color \(e\) gives the full rainbow matching case.

\section{Acknowledgments}
Claude Opus was used to search for references. 
AI tools were used to refine the writing after the draft was completed.

\printbibliography
\end{document}